\documentclass[a4paper,11pt,oneside,reqno]{amsart}
\usepackage[T1]{fontenc}
\usepackage{lmodern}
\usepackage{amsmath, amssymb, amsthm, mathtools}
\usepackage{hyperref}
\usepackage[a4paper,margin=1in]{geometry}
\usepackage{enumitem}
\hypersetup{bookmarksnumbered=true}
\numberwithin{equation}{section}

\newtheorem{theorem}{Theorem}[section]
\newtheorem{introthm}{Theorem}

\newtheorem{lemma}[theorem]{Lemma}

\theoremstyle{definition}
\newtheorem{definition}[theorem]{Definition}
\theoremstyle{remark}
\newtheorem{remark}[theorem]{Remark}

\newcommand{\R}{\mathbb{R}}
\newcommand{\haus}{\mathcal{H}}
\newcommand{\Hom}{\mathrm{Hom}}
\newcommand{\G}{\mathbf{G}}
\newcommand{\loc}{\mathrm{loc}}
\newcommand{\wt}[1]{\mathord{\lVert #1 \rVert}}
\newcommand{\V}{\mathbf{V}}
\newcommand{\IV}{\mathbf{IV}}
\newcommand{\id}{\mathrm{id}}
\renewcommand{\Im}{\operatorname{Im}}
\DeclarePairedDelimiter{\abs}{\lvert}{\rvert}
\DeclarePairedDelimiter{\norm}{\lVert}{\rVert}
\DeclareMathOperator{\spt}{spt}
\DeclareMathOperator{\graph}{graph}

\begin{document}

    \title[$L^2$ normal velocity implies strong solution]{$L^2$ normal velocity implies strong solution for graphical Brakke flows}
    \author{Kotaro Motegi}
    \address{Department of Mathematics, Institute of Science Tokyo, 2-12-1 Ookayama, Meguro-ku, Tokyo 152-8551, Japan}
    \email{motegi.k.3c77@m.isct.ac.jp}

    \begin{abstract}
        We prove that if a one-parameter family of varifolds has an $L^2$ normal velocity $v$ in the sense of Brakke, and if the family is represented as the graph of a continuous function $f$ with continuous spatial derivative $\nabla f$, then $f$ has weak derivatives $\partial_t f, \nabla^2 f \in L^2$, and $v$ coincides with the usual normal velocity of the graph.
        Moreover, by combining this result with parabolic regularity theory, we show that graphical Brakke flows with forcing term in $L^{p,q}$ and $C^{0,\alpha}$ are strong and classical solutions to the forced mean curvature flow equation, respectively.
    \end{abstract}

    \maketitle

    \section{Introduction}
    \label{sec:introduction}

    A family of $k$-dimensional submanifolds $\{M_t\}_{t \geq 0}$ is called a mean curvature flow (hereafter abbreviated as MCF) if its normal velocity $v$ is equal to the mean curvature $h$ of $M_t$ at each point and time.
    More generally, we consider the MCF with forcing term, that is, a family of submanifolds $\{M_t\}_{t \geq 0}$ evolving under the motion law
    \begin{equation}
        \label{eq:intro_motion_law}
        v = h + u^\perp
    \end{equation}
    for a given ambient vector field $u$, where $u^\perp$ denotes the projection of $u$ onto the orthogonal complement of the tangent space of $M_t$.
    Given a smooth $k$-dimensional submanifold $M_0$ and a smooth vector field $u$, there exists a unique smooth MCF with forcing term $u$ starting from $M_0$, at least for a short time.
    However, even in the case $u \equiv 0$, the flow may develop singularities in finite time, such as extinction or pinching.
    To extend the MCF past singularities, Brakke introduced in his pioneering work \cite{Bra78} a generalized notion of MCF, now known as the Brakke flow, formulated in terms of varifolds in geometric measure theory.
    Within this framework, the normal velocity $v$ of a family of varifolds $\{V_t\}_{t \geq 0}$ is characterized by the following inequality:
    \begin{equation}
        \label{eq:intro_Brakke}
        \int_{\R^n} \phi(\cdot,t_2) \,d\wt{V_{t_2}} - \int_{\R^n} \phi(\cdot,t_1) \,d\wt{V_{t_1}} \leq \int_{t_1}^{t_2}\int_{\R^n} (-\phi h + \nabla \phi) \cdot v + \partial_t \phi \,d\wt{V_t}dt
    \end{equation}
    for all $0 \leq t_1 < t_2$ and $\phi \in C^1_c(\R^n \times [0,\infty);[0,\infty))$.
    Here, $\wt{V_t}$ is the weight measure of $V_t$ and $h$ is the generalized mean curvature of $V_t$ (see Section \ref{subsec:varifolds} for the definitions).
    Roughly speaking, a family $\{V_t\}_{t \geq 0}$ is called a Brakke flow with forcing term $u$ if \eqref{eq:intro_Brakke} holds for $v = h + u^\perp$.

    The present paper is concerned with regularity properties of the Brakke flow with forcing term.
    Although the Brakke flow is defined only by an inequality, it admits powerful $\varepsilon$-regularity theorems.
    In particular, Kasai and Tonegawa \cite{KT14} proved that if a Brakke flow $\{V_t\}_{t \in I}$ in an open set $U$ with forcing term $u$ is weakly close to a $k$-dimensional plane in the sense of measures, and if $u$ satisfies the dimensionally sharp integrability condition
    \begin{equation}
        \label{eq:intro_integrability}
        \norm{u}_{L^{p,q}} \coloneqq \biggl(\int_I\biggl(\int_U \abs{u(x,t)}^p \,d\wt{V_t}\biggr)^{q/p}dt\biggr)^{1/q} < \infty
    \end{equation}
    for $p, q \in [2,\infty)$ with $\zeta \coloneqq 1 - k/p - 2/q > 0$, then $\{V_t\}_{t \in I}$ can be represented as a $C^{1,\zeta}$ graph in the space-time interior.
    Here $C^{1,\zeta}$ means $C^{1,\zeta}$ in the space variables and $C^{0,(1+\zeta)/2}$ in the time variable.
    In the following, $C^{0,\alpha}$ and $C^{2,\alpha}$ should be understood in the similar manner.
    This result was subsequently extended by Stuvard and Tonegawa \cite{ST24} to regularity up to the end-time.
    Recently, De Philippis, Gasparetto, and Schulze \cite{DPGS24} gave a short proof of the $C^{1,\zeta}$-regularity for Brakke flows with bounded forcing term, using viscosity techniques.

    Since the motion law \eqref{eq:intro_motion_law} is a second-order quasilinear parabolic PDE, it is natural to expect that the graph function $f$ obtained in \cite{KT14, ST24, DPGS24} is an $L^{p,q}$-strong solution to \eqref{eq:intro_motion_law}; that is, $\partial_t f, \nabla^2 f \in L^{p,q}$, and \eqref{eq:intro_motion_law} holds almost everywhere.
    However, since the formulation of the Brakke flow is based on the inequality \eqref{eq:intro_Brakke}, standard parabolic regularity theory cannot be directly applied.
    Consequently, it remains unclear whether \eqref{eq:intro_Brakke} implies the PDE \eqref{eq:intro_motion_law} for a $C^{1,\zeta}$ graph in general, and only partial results have been obtained so far.
    If the forcing term $u$ is $\alpha$-H\"{o}lder continuous, it has been proven in \cite{Ton14, ST24} that $f \in C^{2,\alpha}$ and that \eqref{eq:intro_motion_law} holds in the classical sense.
    The proof is based on the $C^{2,\alpha}$-version of the blow-up argument, which is the central technique used in \cite{KT14}.
    For forcing terms in $L^{p,q}$, Mori, Tomimatsu, and Tonegawa \cite{MTT23} obtained an affirmative result in the codimension-one case under the additional assumption that $\partial_t f$ is a signed Radon measure.
    To establish \eqref{eq:intro_motion_law}, they first applied the Riesz representation theorem to obtain a Radon measure that turns the inequality \eqref{eq:intro_Brakke} into an equality.
    The desired motion law then follows from substituting the mollified signed distance function from the graph of $f$ into this equality.
    The additional assumption was required to control errors arising from the mollification.

    The purpose of the present paper is to prove that graphical Brakke flows with forcing term are strong solutions to \eqref{eq:intro_motion_law}, without imposing any additional assumptions.
    The following is a special case of Theorem \ref{thm:main}.

    \begin{introthm}
        \label{thm:intro_main}
        Let $\Omega \subset \R^k$ be an open set and $I \subset \R$ an open interval.
        Suppose that a family $\{V_t\}_{t \in I}$ of varifolds in $\Omega \times \R^{n-k}$ has a normal velocity $v \in L^2_\loc(\mu;\R^n)$; that is, $\{V_t\}_{t \in I}$ and $v$ satisfy \eqref{eq:intro_Brakke}, where $\mu$ denotes the Radon measure on $\Omega \times \R^{n-k} \times I$ defined by $d\mu \coloneqq d\norm{V_t}dt$.
        Assume further that $\{V_t\}_{t \in I}$ is represented as the graph of a continuous function $f \colon \Omega \times I \to \R^{n-k}$ with continuous spatial derivative $\nabla f$.
        Then $f$ has weak derivatives $\partial_t f, \nabla^2 f \in L^2_\loc(\Omega \times I)$, and $v$ coincides with the usual normal velocity of the family of graphs $\{\graph f(\cdot,t)\}_{t \in I}$.
    \end{introthm}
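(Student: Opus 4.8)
\emph{Overall plan.}
The plan is to argue locally. Fix a space-time cylinder $Q = B\times J$ with $\overline{Q}\subset\Omega\times I$; since the conclusion is local and rotation-invariant, after a rotation of $\R^n$ and shrinking $Q$ we may assume $\sup_Q\abs{\nabla f}$ is as small as we wish, so that each $V_t$ projects bijectively onto $B$ with $\wt{V_t} = (F_t)_\#\bigl(J_f\,\mathcal{L}^k\bigr)$, where $F_t(x) := (x,f(x,t))$ and $J_f := \sqrt{\det(I+\nabla f^\top\nabla f)}\in[1,2]$. I would obtain $\nabla^2 f\in L^2_\loc$ from elliptic regularity for the time-slices, and extract the existence of $\partial_t f\in L^2_\loc$ together with the identification of $v$ from Brakke's inequality \eqref{eq:intro_Brakke} by testing against functions that, near $\graph f$, are affine in the $\R^{n-k}$-variable.

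\emph{Spatial regularity.}
Implicit in \eqref{eq:intro_Brakke} (via Brakke's convention for its right-hand side) is that for a.e.\ $t$ the measure $\wt{V_t}$ has generalized mean curvature $h(\cdot,t)\in L^2_\loc$, with $\int_J\norm{h(\cdot,t)}_{L^2}^2\,dt<\infty$ by Fubini. Evaluating the first-variation identity $\delta V_t(X) = -\int h\cdot X\,d\wt{V_t}$ on vertical test vector fields $X = \zeta(x)\,e$ and using the graph structure rewrites it as the weak form of a quasilinear elliptic system of minimal-surface type, $-\operatorname{div}_x\bigl(a(\nabla f(\cdot,t))\bigr) = b(\cdot,t)$, with $a$ smooth and uniformly elliptic on the bounded range of $\nabla f$ and $b(\cdot,t)\in L^2_\loc$. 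The standard difference-quotient estimate for this system then gives $\nabla f(\cdot,t)\in W^{1,2}_\loc$ with a bound depending only on $\sup_Q\abs{\nabla f}$ and $\norm{b(\cdot,t)}_{L^2}$; integrating in $t$ yields $\nabla^2 f\in L^2_\loc(\Omega\times I)$. In particular $J_f$, and the coefficient tensors built from $\nabla f$, all lie in $W^{1,2}_\loc$ with locally uniform-in-$t$ bounds---which will also be used below.

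\emph{Time derivative and the velocity.}
For $\psi\in C^1_c(B;[0,\infty))$, a cutoff $\chi\in C^\infty_c(\R^{n-k})$ equal to $1$ near $\{f(x,t):(x,t)\in\overline{Q}\}$, and $C$ sufficiently large, I would insert into \eqref{eq:intro_Brakke} the $t$-independent admissible test functions $\phi = \psi(x)\chi(y)$ and $\phi = \psi(x)\chi(y)\,(C\pm y_i)$, $1\le i\le n-k$. Using $\wt{V_t} = (F_t)_\#(J_f\mathcal{L}^k)$ and that $w(x,t) := v(F_t(x),t)$ is normal to $\graph f(\cdot,t)$---whence $w^{\mathrm h} = -(\nabla f)^\top w^{\mathrm v}$---these read, directly from \eqref{eq:intro_Brakke}, as the statements that $t\mapsto K^\psi(t) - \int_0^t S^\psi$ and $t\mapsto\bigl(CK^\psi(t)\pm H^\psi_i(t)\bigr) - \int_0^t(CS^\psi\pm R^\psi_i)$ are non-increasing, where $K^\psi(t) := \int_B\psi J_f\,dx$, $H^\psi_i(t) := \int_B\psi f_i J_f\,dx$, and $S^\psi,R^\psi_i\in L^1(J)$ are explicit in $\nabla f$, $h$ and $w$. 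Granting that the first relation is in fact \emph{constant}---there is no instantaneous loss of weighted area---the two sign choices force each $H^\psi_i$ (and $K^\psi$) to be absolutely continuous with $\tfrac{d}{dt}H^\psi_i = R^\psi_i$ (and $\tfrac{d}{dt}K^\psi = S^\psi$). A mollification in the $x$-variable---legitimate because $J_f\in C^0\cap W^{1,2}_\loc$, and using the distributional identity for $\partial_t J_f$ coming from the $K^\psi$-relation to absorb the $J_f$-weights---then yields, for every $\psi$,
\[
\frac{d}{dt}\int_B\psi\,f_i\,dx \;=\; \int_B\psi\,\bigl(w^{\mathrm v}_i - \nabla f_i\cdot w^{\mathrm h}\bigr)\,dx .
\]
It follows that $\partial_t f = w^{\mathrm v} - (\nabla f)\,w^{\mathrm h} = \bigl(I+\nabla f\,(\nabla f)^\top\bigr)w^{\mathrm v}\in L^2_\loc(\Omega\times I)$, which is precisely the vertical component of the velocity of $\graph f(\cdot,t)$ under $F_t$, i.e.\ $v = (0,\partial_t f)^\perp$ along the graph. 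This gives $\partial_t f\in L^2_\loc$ and shows that $v$ is the usual normal velocity of $\{\graph f(\cdot,t)\}_{t\in I}$.

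\emph{The main obstacle.}
The one genuinely delicate step is the upgrade from ``$K^\psi - \int_0^t S^\psi$ is non-increasing'' to ``constant'', that is, the exclusion of an instantaneous loss of (weighted) area: \eqref{eq:intro_Brakke} controls $t\mapsto\int\phi\,d\wt{V_t}$ only from above, and a singular-continuous defect is a priori consistent with it. Ruling it out must exploit that the family is honestly graphical and $C^1$---so that $\int\phi\,d\wt{V_t} = \int_B\phi(F_t(x),t)\,J_f\,dx$ is a concrete function of $t$, uniformly continuous thanks to \eqref{eq:intro_Brakke} and $v\in L^2_\loc(\mu)$---together with the spatial $W^{2,2}$-regularity above and the $L^2$-bound on $v$; the plan is to compare $\int\phi\,d\wt{V_t}$ at nearby times via a time-mollification of $f$, for which the weighted area evolves classically, and to estimate the resulting error terms in terms of $\norm{\nabla^2 f}_{L^2}$ and $\norm{v}_{L^2(\mu)}$ so as to preclude the defect. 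Everything else is routine: the elliptic estimate, the test-function bookkeeping, and---for the applications to the motion law \eqref{eq:intro_motion_law}---feeding the resulting quasilinear parabolic equation with forcing in $L^{p,q}$, resp.\ $C^{0,\alpha}$, into standard parabolic $L^{p,q}$-, resp.\ Schauder-, regularity.
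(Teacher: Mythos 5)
Your spatial-regularity step and your choice of test functions are essentially those of the paper: Lemma \ref{lem:Hess_L2} obtains $\nabla^2 f\in L^2_\loc$ from the prescribed mean curvature system by a difference-quotient argument, and your test functions $\psi(x)\chi(y)(C\pm y_i)$ are, up to the positivity bookkeeping, the paper's $\phi(x,t)=\psi(Tx,t)\cdot T^\perp x$. The divergence --- and the genuine gap --- is in what you yourself flag as ``the main obstacle'': the upgrade from ``$K^\psi-\int_0^t S^\psi$ is non-increasing'' to ``constant'', i.e.\ the exclusion of a singular (Cantor-type) loss of weighted area. Your argument \emph{needs} this constancy (only then do the two sign choices force $H^\psi_i$ to be absolutely continuous with the right derivative), but you give no proof of it, and the route you sketch is circular: to compare $\int\phi\,d\wt{V_t}$ with its time-mollified counterpart you must differentiate $t\mapsto\int\psi\,J_{f_\varepsilon}\,dx$, whose derivative involves $\partial_t\nabla f_\varepsilon$ (equivalently, after integrating by parts in $x$, $\partial_t f_\varepsilon$), and no bound on this is available before $\partial_t f\in L^2_\loc$ has been established --- which is precisely what is being proved. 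It is also not clear that the constancy is provable at that stage at all: a priori the Brakke inequality only yields a non-negative defect.

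The paper avoids this issue entirely, and this is its key idea. Instead of trying to kill the defect, it applies the Riesz representation theorem (via Remark \ref{rem:L2-flow}) to obtain a signed Radon measure $\xi$ with $\int\nabla\phi\cdot v+\partial_t\phi\,d\mu=\int\phi\,d\xi$ and $\spt\xi\subset\graph f$, then derives the two identities \eqref{eq:t_derivative_fg} and \eqref{eq:t_derivative_g} for $\partial_t(f\sqrt{g})$ and $\partial_t\sqrt{g}$, \emph{both of which carry the unknown measure} $\tilde\xi$. In the quotient-rule combination $L_1(\psi/\sqrt{g})-L_2(\psi\cdot f/\sqrt{g})$ (justified by a space-time mollification of $\sqrt{g}$ and $f\sqrt{g}$, which only uses their continuity and $\nabla\sqrt{g}\in L^2_\loc$), the two defect terms are $-\int\frac{\psi}{\sqrt{g}}\cdot f\,d\tilde\xi$ and $-\int\frac{\psi\cdot f}{\sqrt{g}}\,d\tilde\xi$, which are identical and cancel --- because the test function $\psi\cdot T^\perp x$ restricted to $\graph f$ equals $\psi\cdot f$. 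So the defect never has to be identified or shown to vanish. To repair your proposal you should either import this cancellation (keep the defect as a measure in both the $K^\psi$- and $H^\psi_i$-relations and observe that it drops out of the quotient), or supply an actual proof of the no-area-loss claim; as written, the central step of your argument is unsupported.
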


    More generally, Theorem \ref{thm:main} also applies to the weaker notion of MCF known as the $L^2$-flow, which is introduced by Mugnai and R\"{o}ger in \cite{MR08} (see Remark \ref{rem:L2-flow}).
    Applying Theorem \ref{thm:intro_main} to Brakke flows with forcing term, we obtain that the graph function $f$ is a strong solution to \eqref{eq:intro_motion_law}.
    Once the PDE \eqref{eq:intro_motion_law} is established, standard parabolic regularity theory yields the $L^{p,q}$-regularity of $f$, as stated below.

    \begin{introthm}
        \label{thm:intro_Lpq}
        Let $p, q \in [2,\infty)$, $R > 0$, and let $B^k_R$ be the open ball in $\R^k$ centered at the origin with radius $R$.
        Suppose that $\{V_t\}_{t \in [-R^2,0]}$ is a Brakke flow in $B^k_R \times \R^{n-k}$ with forcing term $u$ satisfying \eqref{eq:intro_integrability}, and that $\{V_t\}_{t \in [-R^2,0]}$ is represented as the graph of a continuous function $f \colon \overline{B}^k_R \times [-R^2,0] \to \R^{n-k}$ with continuous spatial derivative $\nabla f$.
        Then
        \begin{equation*}
            f \in W^{1,q}((-R^2/4,0);L^p(B^k_{R/2};\R^{n-k})) \cap L^q((-R^2/4,0);W^{2,p}(B^k_{R/2};\R^{n-k}))
        \end{equation*}
        and \eqref{eq:intro_motion_law} holds almost everywhere.
    \end{introthm}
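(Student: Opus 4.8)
The plan is to deduce the statement from Theorem~\ref{thm:main} together with the interior $L^{p,q}$-regularity theory for non-divergence parabolic equations with continuous coefficients. First I would check that $\{V_t\}_{t\in[-R^2,0]}$, regarded on $B^k_R\times\R^{n-k}\times(-R^2,0)$, satisfies the hypotheses of Theorem~\ref{thm:main}. Since $\{V_t\}$ being a Brakke flow with forcing term $u$ means precisely that \eqref{eq:intro_Brakke} holds with $v=h+u^\perp$, it suffices to verify $v\in L^2_\loc(\mu)$. For a Brakke flow one has $h\in L^2_\loc(\mu)$ — the standard consequence of testing \eqref{eq:intro_Brakke} with a spatial cutoff and absorbing the resulting $\int\phi\,\abs{h}^2$ term — and since $\nabla f$ is continuous on the compact set $\overline{B}^k_R\times[-R^2,0]$, the area of $\graph f(\cdot,t)$ over compact subsets of $B^k_R$ is bounded uniformly in $t$, so $\wt{V_t}$ has mass uniformly bounded on compact subsets; H\"older's inequality and $p,q\in[2,\infty)$ then yield $u^\perp\in L^2_\loc(\mu)$, hence $v\in L^2_\loc(\mu)$. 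Theorem~\ref{thm:main} then gives $\partial_t f,\nabla^2 f\in L^2_\loc(B^k_R\times(-R^2,0))$ and that $v$ equals the classical normal velocity $(0,\partial_t f)^\perp$ of $\{\graph f(\cdot,t)\}$; moreover, since \eqref{eq:intro_Brakke} is assumed for all $t_1<t_2\le 0$, the same argument gives the $L^2$-bounds up to the final time, i.e.\ $\partial_t f,\nabla^2 f\in L^2_\loc(B^k_R\times(-R^2,0])$.

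Next I would turn the motion law into a PDE. Because $\nabla^2 f\in L^2_\loc$, for a.e.\ $t$ the graph $\graph f(\cdot,t)$ is a $W^{2,2}$ submanifold, so its generalized mean curvature $h$ agrees $\wt{V_t}$-a.e.\ with the classical mean curvature vector of the graph; combined with $v=h+u^\perp$ and $v=(0,\partial_t f)^\perp$, this shows that \eqref{eq:intro_motion_law} holds almost everywhere on $B^k_R\times(-R^2,0]$. A standard computation — in which the Christoffel-symbol terms coming from $h$ cancel against the tangential component of $(0,\partial_t f)$ — rewrites this as the quasilinear system
\begin{equation*}
    \partial_t f = g^{ij}(\nabla f)\,\partial_i\partial_j f + \widetilde u,
\end{equation*}
where $(g^{ij})$ is the inverse of the induced metric $g_{ij}=\delta_{ij}+\partial_i f\cdot\partial_j f$ and $\widetilde u=u^y-\sum_{\ell=1}^{k}(\partial_\ell f)\,u^x_\ell$, with $u=(u^x,u^y)\in\R^k\times\R^{n-k}$ evaluated along the graph; in particular $\abs{\widetilde u}\le(1+\abs{\nabla f})\,\abs{u}$. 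On $\overline{B}^k_R\times[-R^2,0]$ the matrices $(g^{ij})$ are uniformly elliptic with constants controlled by $\sup\abs{\nabla f}$, and $(x,t)\mapsto g^{ij}(\nabla f(x,t))$ is continuous, hence of vanishing mean oscillation. Finally, since $d\mu=\sqrt{\det(g_{ij})}\,dx\,dt$ with $1\le\sqrt{\det(g_{ij})}\le C$ on the cylinder, the pointwise bound on $\abs{\widetilde u}$ and $\norm{u}_{L^{p,q}}<\infty$ with $p,q\ge2$ give $\widetilde u\in L^{p,q}_\loc(B^k_R\times(-R^2,0])$ with respect to $dx\,dt$.

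It then remains to upgrade the integrability, and this is where standard parabolic theory enters. Viewing $f$ componentwise as a strong solution of the linear non-divergence equation $\partial_t f-a^{ij}\,\partial_i\partial_j f=\widetilde u$ with continuous, uniformly elliptic coefficients $a^{ij}(x,t):=g^{ij}(\nabla f(x,t))$, and already knowing $\partial_t f,\nabla^2 f\in L^2$ up to the final time, I would invoke the interior mixed-norm $W^{2,1}_{p,q}$-estimates for non-divergence parabolic operators with VMO (in particular continuous) coefficients, applied on parabolic cylinders that open toward $t=0$, so that only a localization away from the initial time $-R^2$ is needed. After localizing by a cutoff $\eta$, the right-hand side becomes $\eta\widetilde u$ plus lower-order terms that are bounded (as $f$ and $\nabla f$ are continuous), hence lies in the target mixed-norm space; iterating the estimate finitely many times if necessary then improves the $L^2$-regularity to $\partial_t f,\nabla^2 f\in L^q((-R^2/4,0);L^p(B^k_{R/2}))$. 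A routine covering of $\overline{B}^k_{R/2}\times[-R^2/4,0]$ by such cylinders, all contained in $B^k_{3R/4}\times(-R^2,0]$, converts this into the global bound on $B^k_{R/2}\times(-R^2/4,0)$. Since $f$ and $\nabla f$ are bounded, this is exactly the asserted membership, and \eqref{eq:intro_motion_law} was already shown to hold almost everywhere.

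Granting Theorem~\ref{thm:main}, which is the analytic heart of the paper, the points requiring genuine care here are the passage from the geometric motion law to the quasilinear system in non-divergence form and — the main obstacle — the fact that the coefficients $g^{ij}(\nabla f)$ are merely continuous, so that Schauder theory is unavailable and one must go through the Calder\'on--Zygmund / $W^{2,1}_{p,q}$-theory for VMO coefficients in mixed norms, together with the bookkeeping needed to run those estimates up to the final time $t=0$ rather than only on compact subsets of the open time interval.
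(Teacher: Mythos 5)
Your overall architecture coincides with the paper's: verify the hypotheses of Theorem~\ref{thm:main} (the check that $h\in L^2_\loc(\mu)$ is already built into Definition~\ref{def:Brakke_with_forcing_term}, and your H\"older argument for $u^\perp$ using $p,q\ge 2$ and the uniform local mass bound is exactly what is needed), pass from the motion law to the non-divergence quasilinear system $\partial_t f^a = g^{ij}\partial_{ij}f^a + U^a$ with bounded, uniformly continuous, uniformly elliptic coefficients, and then invoke mixed-norm parabolic theory. Up to and including the derivation of \eqref{eq:PDE}, your proposal matches Section~\ref{sec:Brakke_with_forcing} of the paper.

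The gap is in the final regularity-upgrade step. You propose to apply the interior $W^{2,1}_{p,q}$ a priori estimates for non-divergence operators with VMO coefficients directly to $f$, and to ``iterate the estimate finitely many times'' to pass from $\partial_t f,\nabla^2 f\in L^2$ to $L^{p,q}$. As stated this does not work: those estimates are a priori bounds, valid only for functions already known to lie in $W^{1,q}L^p\cap L^qW^{2,p}$, and iteration does not help because Sobolev embedding improves the integrability of $f$ and $\nabla f$ but not of the top-order quantities $\partial_t f$ and $\nabla^2 f$, which are precisely what must be upgraded. The paper closes this gap differently: it multiplies by a cutoff $\zeta$ so that $\tilde f^a=\zeta f^a$ solves the initial-boundary value problem \eqref{eq:IBP} with zero data, produces by maximal $L^q$--$L^p$ regularity (\cite[Theorem~2.3]{DHP07}) a solution $\bar f^a$ of \eqref{eq:IBP} in $W^{1,q}((-1,0);L^p)\cap L^q((-1,0);W^{2,p})$, and then identifies $\tilde f^a=\bar f^a$ by uniqueness of solutions to \eqref{eq:IBP} in the weaker class $W^{1,2}L^2\cap L^2W^{2,2}$ (\cite[Theorem~7.17]{Lie96}), which contains both since $p,q\ge 2$ and the domain is bounded; the quantitative bound \eqref{eq:Lpq_est} is then obtained from \cite[Theorem~5.2]{DK18} after a suitable extension in time. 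Your write-up needs this existence-plus-uniqueness identification (or an equivalent approximation argument) in place of the direct application of interior a priori estimates; once that substitution is made, the rest of your argument is sound.
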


    The precise statement is given in Theorem \ref{thm:reg_Lpq}.
    Moreover, Theorem \ref{thm:intro_main}, together with the Schauder estimates, implies the $C^{2,\alpha}$-regularity of the Brakke flow with forcing term in $C^{0,\alpha}$, thereby providing a new and concise proof of \cite[Theorem 2.3]{ST24} without relying on the blow-up argument (see Theorem \ref{thm:reg_Holder}).

    The key idea of the proof lies in the choice of test functions.
    As mentioned above, in \cite{MTT23}, the mollified signed distance function $d^\varepsilon$ is used as a test function in the equation obtained from \eqref{eq:intro_Brakke}.
    Here, we replace it with the $l$-th coordinate function $x^l$ for $l = k+1,\dots,n$.
    Since $x^l$ coincides with the $l$-th component of $f$ on its graph, substituting it as a test function gives an identity for $\partial_t(f\sqrt{g})$, where $\sqrt{g}$ denotes the area element of the graph of $f$.
    The advantage of using $x^l$ is that it is independent of time.
    Consequently, our identity does not contain a term analogous to the one involving $\partial_t d^\varepsilon$ in \cite{MTT23}, which is the main reason for the additional assumption required there.
    Essentially, this choice of test function is the same as that in \cite[Lemma 8.4]{KT14}, where it is used to derive the linearized equation of \eqref{eq:intro_motion_law}.
    Unlike in the linearization procedure, where the area element $\sqrt{g}$ can be ignored, here it must be taken into account.
    Therefore, we also derive another identity for $\partial_t\sqrt{g}$ to cancel the terms arising from $\sqrt{g}$, and then combine these identities to obtain an explicit formula for $\partial_t f$.

    The paper is organized as follows.
    Section \ref{sec:preliminaries} introduces the basic notation and definitions, and states the main theorems precisely.
    In Section \ref{sec:Proof}, we give the proof of Theorem \ref{thm:main}.
    In Section \ref{sec:Brakke_with_forcing}, we apply Theorem \ref{thm:main} to Brakke flows with forcing term and prove Theorems \ref{thm:reg_Lpq} and \ref{thm:reg_Holder}.

    \subsection*{Acknowledgements}
    The author would like to express his gratitude to his supervisor, Professor Yoshihiro Tonegawa, and to his friend, Kiichi Tashiro, for their valuable feedback and constant encouragement.

    \section{Preliminaries and main results}
    \label{sec:preliminaries}

    \subsection{Basic notation}
    \label{subsec:notatoin}

    Throughout this paper, $n$ and $k$ denote positive integers with $1 \leq k < n$.
    For $R > 0$, let $B_R$ and $B^k_R$ denote the open balls in $\R^n$ and $\R^k$ centered at the origin with radius $R$, respectively.
    We often identify $\R^k$ with $\R^k \times \{0\} \subset \R^n$.
    Let $\G(n,k)$ be the set of all $k$-dimensional subspaces of $\R^n$.
    We regard $\G(n,k)$ as a subset of $\Hom(\R^n,\R^n)$ by identifying each $S \in \G(n,k)$ with the orthogonal projection onto $S$.
    For $T \in \G(n,k)$ and $R > 0$, we define the cylinder $C_R(T) \coloneqq \{x \in \R^n : \abs{Tx} < R\}$, and the parabolic cylinder $Q_R(T) \coloneqq (B_R \cap T) \times (-R^2,0]$.
    When $T = \R^k \times \{0\}$, we write $Q^k_R$.

    Let $\Omega \subset \R^k$ be an open set, and let $I \subset \R$ be an interval.
    For $p, q \in [1,\infty]$ and a function $f$ defined on $\Omega \times I$, we define the $L^{p,q}$ norm of $f$ by
    \begin{equation*}
        \norm{f}_{L^{p,q}(\Omega \times I)} \coloneqq \biggl(\int_I\biggl(\int_\Omega \abs{f(x,t)}^p \,dx\biggr)^{q/p}dt\biggr)^{1/q}.
    \end{equation*}
    The set of all functions on $\Omega \times I$ with finite $L^{p,q}$ norm is denoted by $L^{p,q}(\Omega \times I)$.
    For $\alpha \in (0,1)$, we define the parabolic H\"{o}lder seminorms of $f$ by
    \begin{align*}
        [f]_{\alpha,\Omega \times I} &\coloneqq \sup_{(x,t),(y,s) \in \Omega \times I,\; (x,t) \neq (y,s)} \frac{\abs{f(x,t)-f(y,s)}}{\max\{\abs{x-y}^\alpha,\abs{t-s}^{\alpha/2}\}}, \\
        [f]_{1+\alpha,\Omega \times I} &\coloneqq [\nabla f]_{\alpha,\Omega \times I} + \sup_{x \in \Omega,\; t,s \in I,\; t \neq s} \frac{\abs{f(x,t)-f(x,s)}}{\abs{t-s}^{(1+\alpha)/2}}, \\
        [f]_{2+\alpha,\Omega \times I} &\coloneqq [\partial_t f]_{\alpha,\Omega \times I} + [\nabla^2 f]_{\alpha,\Omega \times I},
    \end{align*}
    where $\nabla f$ and $\nabla^2 f$ denote the spatial gradient and Hessian of $f$, respectively.
    For $j = 0, 1, 2$, we define the parabolic $C^{j,\alpha}$ norm of $f$ by
    \begin{equation*}
        \norm{f}_{j+\alpha,\Omega \times I} \coloneqq \sum_{l + 2m \leq j} \norm{\nabla^l\partial_t^m f}_{0,\Omega \times I} + [f]_{j+\alpha,\Omega \times I},
    \end{equation*}
    where $\norm{f}_{0,\Omega \times I} \coloneqq \sup_{(x,t) \in \Omega \times I} \abs{f(x,t)}$.
    The set of all functions on $\Omega \times I$ with finite parabolic $C^{j,\alpha}$ norm is denoted by $C^{j,\alpha}(\overline{\Omega \times I})$.

    \subsection{Varifolds}
    \label{subsec:varifolds}

    We recall some notions related to varifolds and refer to \cite{All72, Sim83} for more details.
    Let $U \subset \R^n$ be an open set, and define $G_k(U) \coloneqq U \times \G(n,k)$.
    A $k$-varifold in $U$ is a Radon measure on $G_k(U)$, and the set of all $k$-varifolds in $U$ is denoted by $\V_k(U)$.
    For $V \in \V_k(U)$, its weight measure $\wt{V}$ is the Radon measure on $U$ defined by 
    \begin{equation*}
        \wt{V}(\phi) \coloneqq \int_{G_n(U)} \phi(x) \,dV(x,S)
    \end{equation*}
    for every $\phi \in C^0_c(U)$.
    We say that $V \in \V_k(U)$ is integral if there exist an $\haus^k$-measurable, countably $k$-rectifiable set $M \subset U$ and a non-negative, integer-valued, locally $\haus^k$-integrable function $\theta$ on $M$ such that
    \begin{equation*}
        V(\phi) = \int_M \phi(x,T_xM)\theta(x) \,d\haus^k(x)
    \end{equation*} 
    for every $\phi \in C^0_c(G_k(U))$, where $T_xM$ denotes the approximate tangent space of $M$ at $x$, which exists for $\haus^k$-a.e. $x \in M$.
    The set of all integral $k$-varifolds in $U$ is denoted by $\IV_k(U)$.
    When $\theta = 1$, we say that $V$ is of unit density, and we write $V = \abs{M}$.

    The first variation of $V \in \V_k(U)$ is the linear functional $\delta V$ on $C^1_c(U;\R^n)$ defined by
    \begin{equation*}
        \delta V(g) \coloneqq \int_{G_k(U)} \nabla g(x) \cdot S \,dV(x,S)
    \end{equation*}
    for every $g \in C^1_c(U;\R^n)$.
    When $\delta V$ extends to a continuous linear functional on $C^0_c(U;\R^n)$, we say that $V$ has locally bounded first variation, and denote the corresponding total variation measure by $\wt{\delta V}$.
    If $\wt{\delta V}$ is absolutely continuous with respect to $\wt{V}$, there exists a unique vector field $h(V,\cdot) \in L^1_\loc(\wt{V};\R^n)$ such that
    \begin{equation*}
        \delta V(g) = -\int_U g(x) \cdot h(V,x) \,d\wt{V}(x)
    \end{equation*}
    for all $g \in C^1_c(U;\R^n)$.
    We call $h(V,\cdot)$ the generalized mean curvature of $V$.

    \subsection{Generalized notion of normal velocity}
    \label{subsec:velocity}

    In this subsection, we introduce a generalized notion of normal velocity for varifolds and define Brakke flows with forcing term.
    Let $U \subset \R^n$ be an open set, and let $I \subset \R$ be an interval. 
    
    \begin{definition}
        \label{def:velocity_Brakke}
        Suppose a family $\{V_t\}_{t \in I} \subset \V_k(U)$ satisfies the following conditions:
        \begin{enumerate}[label=(\roman*)]
            \item \label{itm:v_integrality}For a.e. $t \in I$, $V_t \in \IV_k(U)$.
            \item \label{itm:v_finite_mass}For any $\tilde{U} \subset\subset U$ and $\tilde{I} \subset\subset I$, we have $\sup_{t \in \tilde{I}}\wt{V_t}(\tilde{U}) < \infty$.
            \item \label{itm:v_first_variation}For a.e. $t \in I$, $V_t$ has locally bounded first variation and $\wt{\delta V_t} \ll \wt{V_t}$.
            \item \label{itm:v_mean_curv}For any $\tilde{U} \subset\subset U$ and $\tilde{I} \subset\subset I$, we have
                \begin{equation*}
                    \int_{\tilde{I}}\int_{\tilde{U}} \abs{h(V_t,x)}^2 \,d\wt{V_t}(x)dt < \infty.
                \end{equation*}
        \end{enumerate}
        Let $\mu$ be the Radon measure on $U \times I$ defined by $d\mu \coloneqq d\wt{V_t}dt$.
        A vector field $v \in L^2_\loc(\mu;\R^n)$ is called a generalized normal velocity for $\{V_t\}_{t \in I}$ if the following conditions hold:
        \begin{enumerate}[resume*]
            \item \label{itm:v_perp}For a.e. $t \in I$ and $V_t$-a.e. $(x,S) \in G_k(U)$, we have $Sv(x,t) = 0$.
            \item \label{itm:v_Brakke_ineq}For all $t_1, t_2 \in I$ with $t_1 < t_2$ and $\phi \in C^1_c(U \times I;[0,\infty))$, we have
                \begin{multline}
                    \label{eq:Brakke_ineq}
                    \int_U \phi(x,t_2) \,d\wt{V_{t_2}}(x) - \int_U \phi(x,t_1) \,d\wt{V_{t_1}}(x) \\
                    \leq \int_{t_1}^{t_2}\int_U (-\phi(x,t)h(V_t,x) + \nabla \phi(x,t)) \cdot v(x,t) + \partial_t \phi(x,t) \,d\wt{V_t}(x)dt.
                \end{multline}
        \end{enumerate}
    \end{definition}

    If $\{M_t\}_{t \in I}$ is a smooth family of $k$-dimensional submanifolds, the usual normal velocity is the unique generalized normal velocity for $\{\abs{M_t}\}_{t \in I}$ and \eqref{eq:Brakke_ineq} holds with equality.
    See \cite[Chapter 2]{Ton19} for details on this definition.

    \begin{definition}
        \label{def:Brakke_with_forcing_term}
        Let $\{V_t\}_{t \in I} \subset \V_k(U)$ and $u \in L^2_\loc(\mu,\R^n)$, where $\mu$ is as in Definition \ref{def:velocity_Brakke}.
        We say that $\{V_t\}_{t \in I}$ is a Brakke flow with forcing term $u$ if it satisfies \ref{itm:v_integrality}--\ref{itm:v_mean_curv} of Definition \ref{def:velocity_Brakke}, and if $v(x,t) \coloneqq h(V_t,x) + u^\perp(x,t)$ is a generalized normal velocity of $\{V_t\}_{t \in I}$.
        Here $u^\perp(x,t)$ denotes the projection of $u(x,t)$ onto the orthogonal complement of the approximate tangent space of $V_t$ at $x$.
    \end{definition}

    Note that $v = h + u^\perp$ automatically satisfies \ref{itm:v_perp} of Definition \ref{def:velocity_Brakke} by Brakke's perpendicularity theorem \cite[5.8]{Bra78}.

    \subsection{Main results}
    \label{subsec:main_results}

    The following is the main theorem of this paper.

    \begin{theorem}
        \label{thm:main}
        Let $T \in \G(n,k)$ be a $k$-dimensional subspace, $\Omega \subset T$ an open set, and let $I \subset \R$ be an open interval.
        Suppose that a function $f \colon \Omega \times I \to T^\perp$ satisfies the following conditions:
        \begin{enumerate}[label=(\arabic*)]
            \item \label{itm:main_conti_f}The function $f \in C^0(\Omega \times I;T^\perp)$.
            \item \label{itm:main_conti_nabla_f}For all $t \in I$, $f(\cdot,t)$ is differentiable on $\Omega$, and its spatial derivative $\nabla f \in C^0(\Omega \times I;\Hom(T;T^\perp))$.
        \end{enumerate}
        Let $V_t$ denote the unit density $k$-varifold in $U \coloneqq \{x \in \R^n : Tx \in \Omega\}$ defined by $V_t \coloneqq \abs{\graph f(\cdot,t)}$, and let $\mu$ be the Radon measure on $U \times I$ defined by $d\mu \coloneqq d\wt{V_t}dt$.
        Suppose further that the family $\{V_t\}_{t \in I}$ and a vector field $v \in L^2_\loc(\mu;\R^n)$ satisfy the following conditions:
        \begin{enumerate}[resume*]
            \item \label{itm:main_delta_V}For a.e. $t \in I$, $V_t$ has locally bounded first variation and $\wt{\delta V_t} \ll \wt{V_t}$.
            \item \label{itm:main_mean_curv}For any $\tilde{U} \subset\subset U$ and $\tilde{I} \subset\subset I$, the generalized mean curvature $h(V_t,\cdot)$ of $V_t$ satisfies
                \begin{equation}
                    \label{eq:main_mean_curv}
                    \int_{\tilde{I}}\int_{\tilde{U}} \abs{h(V_t,x)}^2 \,d\wt{V_t}dt < \infty.
                \end{equation}
            \item \label{itm:main_L2_velocity}For any $\tilde{U} \subset\subset U$ and $\tilde{I} \subset\subset I$, there exists a constant $C_{\tilde{U},\tilde{I}} > 0$ such that for all $\phi \in C^1_c(\tilde{U} \times \tilde{I})$,
                \begin{equation}
                    \label{eq:main_L2_velocity}
                    \abs*{\int_I\int_U \nabla \phi(x,t) \cdot v(x,t) + \partial_t \phi(x,t) \,d\wt{V_t}dt} \leq C_{\tilde{U},\tilde{I}}\norm{\phi}_{C^0(\tilde{U} \times \tilde{I})}.
                \end{equation}
        \end{enumerate}
        Then $f$ has weak derivatives
        \begin{equation*}
            \partial_t f \in L^2_\loc(\Omega \times I;T^\perp) \qquad \text{and} \qquad \nabla^2 f \in L^2_\loc(\Omega \times I;\Hom(T \otimes T,T^\perp)).
        \end{equation*}
        Moreover, the following identity holds for a.e. $(x,t) \in \Omega \times I$:
        \begin{equation}
            \label{eq:main_t_derivative}
            \partial_t f(x,t) = T^\perp v(x+f(x,t),t) - \nabla_{Tv(x+f(x,t),t)} f(x,t).
        \end{equation}
    \end{theorem}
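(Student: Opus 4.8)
The plan is to convert the bound \eqref{eq:main_L2_velocity} into two distributional identities on $\Omega\times I$ — one for $\partial_t(f^l\sqrt g)$ and one for $\partial_t\sqrt g$, where $\sqrt g:=\sqrt{\det(\id+\nabla f^{\mathsf T}\nabla f)}$ is the area element — and then to cancel the error terms, which will turn out to be the same measure up to the factor $f^l$ because they concentrate on the space-time graph of $f$. I first fix coordinates with $T=\R^k\times\{0\}$, write $x=(y,z)\in\R^k\times\R^{n-k}$, and work over an arbitrary $\tilde\Omega\times\tilde I\subset\subset\Omega\times I$. Since $f$ is continuous, $\abs f\le M$ on $\overline{\tilde\Omega\times\tilde I}$ for some $M$, so I may fix $\rho\in C^1_c(\R^{n-k})$ equal to $1$ on a neighbourhood of $\{\abs z\le M\}$ and use in \eqref{eq:main_L2_velocity} test functions $\phi(y,z,t)=\psi(z)\rho(z)\chi(y,t)$ with $\chi\in C^1_c(\tilde\Omega\times\tilde I)$; along the graph $z=f(y,t)$ one has $\rho\equiv1$ and $\nabla_z\rho\equiv0$, which trivialises the chain-rule computations. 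Independently of this, I would establish $\nabla^2 f\in L^2_\loc(\Omega\times I)$: for a.e.\ $t$, $V_t=\abs{\graph f(\cdot,t)}$ is a $C^1$ graph with $\wt{\delta V_t}\ll\wt{V_t}$ and generalized mean curvature in $L^2_\loc(\wt{V_t})$, so the standard elliptic estimate for the first-variation operator of a graph (difference quotients in the spatial variables) gives $f(\cdot,t)\in W^{2,2}_\loc(\Omega)$ with $\int_{\tilde\Omega}\abs{\nabla^2 f(\cdot,t)}^2$ controlled by $\norm{\nabla f}_{C^0}$ and $\int_{\tilde U}\abs{h(V_t,\cdot)}^2\,d\wt{V_t}$; integrating in $t$ and using \eqref{eq:main_mean_curv} gives the claim, and in particular $\sqrt g\in C^0$ with $\nabla_y\sqrt g\in L^2_\loc$ and $\sqrt g\ge1$.

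The key step is the following. By \eqref{eq:main_L2_velocity}, the functional $\phi\mapsto\int(\nabla\phi\cdot v+\partial_t\phi)\,d\mu$ on $C^1_c(U\times I)$ is locally bounded in the $C^0$-norm, hence is represented by a signed Radon measure $\nu$ on $U\times I$; since $\mu$ is carried by the relatively closed set $\Sigma:=\{(y,z,t)\in U\times I: z=f(y,t)\}$, so is $\nu$. Let $\mu^0$ be the push-forward of $\nu$ under $(y,z,t)\mapsto(y,t)$, a signed Radon measure on $\Omega\times I$. Taking $\psi\equiv1$ gives, for all $\chi\in C^1_c(\Omega\times I)$,
\begin{equation*}
    \int_{\Omega\times I}(\nabla_y\chi\cdot Tv+\partial_t\chi)\,\sqrt g\,dy\,dt=\int_{\Omega\times I}\chi\,d\mu^0,
\end{equation*}
where here and below $Tv$ and $T^\perp v$ abbreviate $Tv(y+f(y,t),t)$ and $T^\perp v(y+f(y,t),t)$; equivalently $\partial_t\sqrt g+\nabla_y\cdot((Tv)\sqrt g)=-\mu^0$ in $\mathcal D'(\Omega\times I)$. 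Taking $\psi(z)=z^l$ (the $l$-th coordinate of $z$, $l=1,\dots,n-k$) and using that $z^l=f^l$ on $\Sigma$ — so that the $z^l$-weighting of $\nu$ becomes the $f^l$-weighting of $\mu^0$ — gives
\begin{equation*}
    \partial_t(f^l\sqrt g)+\nabla_y\cdot(f^l(Tv)\sqrt g)-(T^\perp v)^l\sqrt g=-f^l\mu^0\quad\text{in }\mathcal D'(\Omega\times I).
\end{equation*}

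To combine the two, I expand $\nabla_y\cdot(f^l(Tv)\sqrt g)=\nabla_y f^l\cdot(Tv)\sqrt g+f^l\,\nabla_y\cdot((Tv)\sqrt g)$, which is legitimate because $\nabla f\in C^0$ (so $f^l(Tv)\sqrt g\in L^2_\loc$ and $f^l\chi$ is an admissible test function against the spatial divergence), and substitute the first identity; the divergence terms and the $\mu^0$-terms cancel, leaving
\begin{equation*}
    \partial_t(f^l\sqrt g)-f^l\,\partial_t\sqrt g=\bigl((T^\perp v)^l-(\nabla_{Tv}f)^l\bigr)\sqrt g,
\end{equation*}
where the left side and the product $f^l\,\partial_t\sqrt g$ are interpreted through the identities of the previous paragraph. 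Dividing by $\sqrt g\ge1$ — carried out by mollifying $f$ and $\sqrt g$ in the space-time variables and passing to the limit, using the $L^2$ bound on $\nabla^2 f$ and the continuity of $f$ and $\nabla f$ — then shows that $f^l$ has weak time derivative $(T^\perp v)^l-(\nabla_{Tv}f)^l$, which is exactly \eqref{eq:main_t_derivative}; since $v\in L^2_\loc(\mu)$ and $\nabla f\in C^0$ make the right-hand side lie in $L^2_\loc(\Omega\times I)$, this also gives $\partial_t f\in L^2_\loc$. Finally, a direct computation identifies the right-hand side of \eqref{eq:main_t_derivative} with the $T^\perp$-part of the normal velocity of $\{\graph f(\cdot,t)\}$, and when $v$ is perpendicular to the tangent planes of $V_t$ (e.g.\ for Brakke flows, by Brakke's perpendicularity theorem) it says precisely that $v$ is the usual normal velocity.

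The main obstacle is the combination step. Conceptually, the decisive point is that the two defect measures coincide up to the factor $f^l$: this is where one genuinely uses that the $V_t$ are graphs, via the fact that $\nu$ concentrates on $\Sigma$, on which the coordinate function $z^l$ agrees with $f^l$. Technically, the remaining difficulty is that $\sqrt g$ is only continuous with no a priori control on its time derivative, so passing from the identity for $\partial_t(f^l\sqrt g)$ to one for $\partial_t f^l$ — i.e.\ dividing out $\sqrt g$ — must be done carefully; here $\sqrt g\ge1$, the $L^2$ bound on $\nabla^2 f$, and the joint continuity of $\nabla f$ are what make the mollification argument go through.
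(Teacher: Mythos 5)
Your proposal is correct and follows essentially the same route as the paper: the Riesz representation of the velocity functional as a signed measure supported on the space-time graph, testing with the normal coordinate functions $z^l$ (so that the defect measure acquires the weight $f^l$) and with functions of $(y,t)$ alone to get the two identities for $\partial_t(f^l\sqrt g)$ and $\partial_t\sqrt g$, the difference-quotient elliptic estimate for $\nabla^2 f\in L^2_\loc$, and the final division by $\sqrt g$ justified by space-time mollification using $\sqrt g\ge 1$ and the $L^2$ control of $\nabla\sqrt g$. The only cosmetic difference is that you insert an explicit cutoff $\rho(z)$ to make the test functions compactly supported in the normal directions, which is a sound (indeed slightly more careful) implementation of the same computation.
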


    \begin{remark}
        \label{rem:L2-flow}
        Assumption \ref{itm:main_L2_velocity} is equivalent to saying that $v$ is, locally in space-time, a generalized normal velocity in the sense of the $L^2$-flow defined by Mugnai and R\"{o}ger in \cite{MR08}.
        A generalized normal velocity defined in Definition \ref{def:velocity_Brakke} satisfies assumption \ref{itm:main_L2_velocity}.
        Indeed, \eqref{eq:Brakke_ineq} implies that the linear functional $L$ defined by
        \begin{equation*}
            L(\phi) \coloneqq \int_I\int_U (-\phi(x,t)h(V_t,x) + \nabla \phi(x,t)) \cdot v(x,t) + \partial_t\phi(x,t) \,d\wt{V_t}(x)dt
        \end{equation*}
        for $\phi \in C^1_c(U \times I)$ is non-negative.
        Hence, by \cite[Theorem 1.39]{EG92}, $L$ is a (non-negative) Radon measure on $U \times I$; in particular, \eqref{eq:main_L2_velocity} holds.
    \end{remark}

    \begin{remark}
        \label{rem:identity}
        The identity \eqref{eq:main_t_derivative} implies that $v^\perp$ is the usual normal velocity of the family of graphs $\{\graph f(\cdot,t)\}_{t \in I}$.
        To see this, set $S(x,t) \coloneqq T_{x+f(x,t)}\graph f(\cdot,t) \in \G(n,k)$.
        Then, since $S(x,t) = \Im(T + \nabla f(x,t))$, we have $S(x,t)^\perp T = - S(x,t)^\perp \nabla f(x,t)$.
        Hence we obtain
        \begin{align*}
            S(x,t)^\perp \partial_t f(x,t) &= S(x,t)^\perp T^\perp v(x+f(x,t),t) - S(x,t)^\perp \nabla_{Tv(x+f(x,t),t)} f(x,t) \\
            &= S(x,t)^\perp T^\perp v(x+f(x,t),t) + S(x,t)^\perp Tv(x+f(x,t),t) \\
            &= S(x,t)^\perp v(x+f(x,t),t).
        \end{align*}
    \end{remark}
    
    We present two applications of Theorem \ref{thm:main} to Brakke flows with forcing term.
    The first concerns the case where the forcing term belongs to $L^{p,q}$.

    \begin{theorem}
        \label{thm:reg_Lpq}
        Let $R > 0$, $T \in \G(n,k)$, and $p, q \in [2,\infty)$.
        Suppose that a family of $k$-varifolds $\{V_t\}_{t \in [-R^2,0]} \subset \V_k(C_R(T))$, a family of vector fields $\{u(\cdot,t)\}_{t \in [-R^2,0]}$ on $C_R(T)$, and a function $f \in C^0([-R^2,0];C^1(\overline{B_R \cap T};T^\perp))$ satisfy the following conditions:
        \begin{enumerate}[label=(\arabic*)]
            \item \label{itm:Lpq_integrability}The vector field $u$ satisfies
                \begin{equation*}
                    \norm{u}_{L^{p,q}} \coloneqq \biggl(\int_{-R^2}^0\biggl(\int_{C_R(T)} \abs{u(x,t)}^p \,d\wt{V_t}(x)\biggr)^{q/p}\,dt\biggr)^{1/q} < \infty.
                \end{equation*}
            \item \label{itm:Lpq_Brakke}The family $\{V_t\}_{t \in [-R^2,0]}$ is a Brakke flow in $C_R(T)$ with forcing term $u$.
            \item \label{itm:graph}For all $t \in [-R^2,0]$, we have $V_t = \abs{\graph f(\cdot,t)}$.
        \end{enumerate}
        Then we have
        \begin{equation}
            \label{eq:Lpq_class}
            f \in W^{1,q}((-R^2/4,0);L^p(B_{R/2} \cap T;T^\perp)) \cap L^q((-R^2/4,0);W^{2,p}(B_{R/2} \cap T;T^\perp))
        \end{equation}
        and
        \begin{equation}
            \label{eq:Lpq_est}
            \norm{\partial_t f}_{L^{p,q}(Q_{R/2}(T))} + \norm{\nabla^2 f}_{L^{p,q}(Q_{R/2}(T))} \leq C\bigl(R^{-2}\norm{f}_{L^{p,q}(Q_R(T))} + \norm{u}_{L^{p,q}}\bigr),
        \end{equation}
        where $C > 0$ is a constant depending only on $n$, $k$, $p$, $q$, $\norm{\nabla f}_{0,Q_R(T)}$, and the modulus of continuity of $\nabla f$.
        Moreover, $f$ satisfies the motion law \eqref{eq:intro_motion_law} a.e. on $Q_{R/2}(T)$.
    \end{theorem}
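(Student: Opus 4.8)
The plan is to invoke Theorem \ref{thm:main} to obtain the weak derivatives of $f$ and the identity \eqref{eq:main_t_derivative}, to rewrite that identity as a uniformly parabolic quasilinear equation for $f$ with right-hand side in $L^{p,q}$, and then to apply the interior mixed-norm $L^{p,q}$-estimates for second-order parabolic equations. First I would take $\Omega \coloneqq B_R\cap T$ and $I \coloneqq (-R^2,0)$ and verify the hypotheses of Theorem \ref{thm:main}: conditions \ref{itm:main_conti_f} and \ref{itm:main_conti_nabla_f} are immediate from $f \in C^0([-R^2,0];C^1(\overline{B_R\cap T};T^\perp))$; conditions \ref{itm:main_delta_V} and \ref{itm:main_mean_curv} hold because $\{V_t\}$ is a Brakke flow with forcing term (conditions \ref{itm:v_first_variation} and \ref{itm:v_mean_curv} of Definition \ref{def:velocity_Brakke}); and condition \ref{itm:main_L2_velocity} holds because $v(x,t) \coloneqq h(V_t,x) + u^\perp(x,t)$ is a generalized normal velocity of $\{V_t\}$ by hypothesis \ref{itm:Lpq_Brakke}, so \eqref{eq:main_L2_velocity} follows from Remark \ref{rem:L2-flow}. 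Theorem \ref{thm:main} then gives $\partial_t f, \nabla^2 f \in L^2_\loc(\Omega\times I)$ and, a.e., $\partial_t f(x,t) = T^\perp v(x+f(x,t),t) - \nabla_{Tv(x+f(x,t),t)}f(x,t)$ with $v = h(V_t,\cdot) + u^\perp$.

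The next step is to make this identity explicit. Identifying $T \cong \R^k$ and $T^\perp \cong \R^{n-k}$, writing $u = (u', u'')$ with $u'$ valued in $\R^k$ and $u''$ in $\R^{n-k}$, and letting $(g^{ij})$ be the inverse of the induced metric $g_{ij} \coloneqq \delta_{ij} + \partial_i f \cdot \partial_j f$, I claim that for a.e. $(x,t)$,
\begin{equation*}
    \partial_t f(x,t) = g^{ij}(x,t)\,\partial^2_{ij}f(x,t) + u''(x+f(x,t),t) - \nabla_{u'(x+f(x,t),t)}f(x,t).
\end{equation*}
To derive this, note that the operator $w \mapsto T^\perp w - \nabla_{Tw}f$ in \eqref{eq:main_t_derivative} is linear and annihilates every vector tangent to $\graph f(\cdot,t)$, since it sends the basis vector $e_i + \partial_i f$ to $\partial_i f - \partial_i f = 0$; hence $u^\perp$ may be replaced by $u$ inside it, yielding the last two terms. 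For the mean-curvature part, now that $\nabla^2 f(\cdot,t) \in L^2_\loc$ one integrates by parts in the first variation of $V_t$ to identify its generalized mean curvature at $x + f(x,t)$ with the a.e.-defined classical expression, and a direct computation analogous to Remark \ref{rem:identity} then gives $T^\perp h - \nabla_{Th}f = g^{ij}\partial^2_{ij}f$. The displayed equation is the graphical form of the motion law \eqref{eq:intro_motion_law}, and by Remark \ref{rem:identity} it shows that the usual normal velocity of $\{\graph f(\cdot,t)\}$ equals $h + u^\perp$; in particular \eqref{eq:intro_motion_law} holds a.e. on $Q_{R/2}(T)$. I expect this to be the main obstacle: one must rigorously pass from the generalized mean curvature of the merely $W^{2,2}$-graphical varifold $V_t$ to the pointwise second-order expression — which requires that the relevant combinations of $\nabla f$, $(g^{ij})$ and $\sqrt{g}$ lie in $W^{1,2}_\loc$ so that the integration by parts is legitimate — and then carry out the tensorial bookkeeping; the remaining steps are applications of Theorem \ref{thm:main} and of standard parabolic theory.

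For the final step, the displayed identity is a linear parabolic equation for $f$ whose leading coefficients $g^{ij}(x,t) = g^{ij}(\nabla f(x,t))$ are continuous, with modulus of continuity controlled by that of $\nabla f$, and uniformly elliptic because $\id \leq (g_{ij}) \leq (1 + \norm{\nabla f}_{0,Q_R(T)}^2)\,\id$. Its inhomogeneous term $w$ satisfies $\abs{w(x,t)} \leq (1 + \norm{\nabla f}_{0,Q_R(T)})\,\abs{u(x+f(x,t),t)}$, and since $V_t$ has unit density the area formula gives
\begin{equation*}
    \int_{C_R(T)}\abs{u(x,t)}^p\,d\wt{V_t}(x) = \int_{B_R\cap T}\abs{u(x+f(x,t),t)}^p\sqrt{g(x,t)}\,dx;
\end{equation*}
as $1 \leq \sqrt{g} \leq (1 + \norm{\nabla f}_{0,Q_R(T)}^2)^{k/2}$, it follows that $\norm{w}_{L^{p,q}(Q_R(T))} \leq C\norm{u}_{L^{p,q}}$. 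Since $\partial_t f, \nabla^2 f \in L^2_\loc$, the function $f$ is a strong solution of this equation, so the interior mixed-norm $L^{p,q}$-regularity theory for parabolic equations with continuous leading coefficients — which upgrades an a priori $W^{2,1}_2$ solution with $L^{p,q}$ right-hand side to a $W^{2,1}_{p,q}$ one, and in its localized form carries the parabolic factor $R^{-2}$ in front of the lowest-order term — yields \eqref{eq:Lpq_class} together with the estimate \eqref{eq:Lpq_est}, with $C$ depending only on $n$, $k$, $p$, $q$, $\norm{\nabla f}_{0,Q_R(T)}$, and the modulus of continuity of $\nabla f$.
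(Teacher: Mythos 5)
Your proposal follows essentially the same route as the paper: verify the hypotheses of Theorem \ref{thm:main} using Remark \ref{rem:L2-flow}, rewrite \eqref{eq:main_t_derivative} with $v = h + u^\perp$ as the uniformly parabolic equation $\partial_t f^a = g^{ij}\partial_{ij}f^a + U^a$ via the area formula (the identification of $h$ with the divergence-form expression is already implicit in the weak formulation used in Lemma \ref{lem:Hess_L2}, so the step you flag as the main obstacle is not a real one), and conclude by mixed-norm parabolic theory. The one step you leave as a black box is the passage from the a priori $W^{1,2}L^2 \cap L^2W^{2,2}$ regularity to the $W^{1,q}L^p \cap L^qW^{2,p}$ class: interior $L^{p,q}$ estimates are a priori estimates for solutions already known to lie in that class, so the ``upgrade'' you invoke is not an off-the-shelf interior statement; the paper supplies the mechanism by cutting off, solving the resulting initial-boundary value problem in $W^{1,q}L^p \cap L^qW^{2,p}$ via maximal regularity, and identifying this solution with $\zeta f^a$ through uniqueness in the $L^2$-based class, after which the estimate \eqref{eq:Lpq_est} follows from a whole-space result via an extension argument. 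Apart from spelling out that mechanism, your argument matches the paper's.
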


    \begin{remark}
        \label{rem:graphicality}
        Suppose that $\{V_t\}_{t \in [-R^2,0]}$ and $\{u(\cdot,t)\}_{t \in [-R^2,0]}$ satisfy assumptions \ref{itm:Lpq_integrability} and \ref{itm:Lpq_Brakke} for $p, q$ with $\zeta \coloneqq 1-k/p-2/q > 0$.
        If, in addition, $\{V_t\}_{t \in [-R^2,0]}$ is weakly close to a multiplicity-one $k$-plane in the sense of varifolds, then there exists a parabolic $C^{1,\zeta}$ function $f$ satisfying assumption \ref{itm:graph} in a smaller parabolic cylinder (see \cite[Theorem 2.2]{ST24} for the precise statement).
        Furthermore, $f$ satisfies a suitable $C^{1,\zeta}$ estimate; hence, in this case, the constant $C > 0$ in \eqref{eq:Lpq_est} can be chosen independently of $f$.
    \end{remark}

    The second concerns Brakke flows with H\"{o}lder continuous forcing term.
    The following theorem was proven in \cite[Theorem 2.3]{ST24} by using the blow-up argument.
    Here, we give a new and concise proof using Theorem \ref{thm:main}.

    \begin{theorem}
        \label{thm:reg_Holder}
        Let $R > 0$, $T \in \G(n,k)$, and $\alpha \in (0,1)$.
        Suppose that a family of $k$-varifolds $\{V_t\}_{t \in [-R^2,0]} \subset \V_k(C_R(T))$, a vector field $u \in C^{0,\alpha}(\overline{C}_R(T) \times [-R^2,0];\R^n)$, and a function $f \in C^{1,\alpha}(\overline{Q}_R(T);T^\perp)$ satisfy assumptions \ref{itm:Lpq_Brakke} and \ref{itm:graph} of Theorem \ref{thm:reg_Lpq}.
        Then we have $f \in C^{2,\alpha}(\overline{Q}_{R/2}(T);T^\perp)$ and
        \begin{multline}
            \label{eq:Holder_est}
            \norm{\partial_t f}_{0,Q_{R/2}(T)} + \norm{\nabla^2 f}_{0,Q_{R/2}(T)} + R^\alpha\bigl([\partial_t f]_{\alpha,Q_{R/2}(T)} + [\nabla^2 f]_{\alpha,Q_{R/2}(T)}\bigr) \\
            \leq C\bigl(R^{-2}\norm{f}_{0,Q_R(T)} + \norm{u}_{0,C_R(T) \times (-R^2,0]} + R^\alpha [u]_{\alpha,C_R(T) \times (-R^2,0]}\bigr),
        \end{multline}
        where $C > 0$ is a constant depending only on $n$, $k$, $\alpha$, $\norm{\nabla f}_{0,Q_R(T)}$, and $R^\alpha[f]_{1+\alpha,Q_R(T)}$.
        Moreover, $f$ is a classical solution to \eqref{eq:intro_motion_law}.
    \end{theorem}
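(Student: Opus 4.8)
The plan is to use Theorem~\ref{thm:main} to recognize $f$ as an $L^2_\loc$-strong solution of a linear, uniformly parabolic equation whose coefficients and right-hand side are parabolically $\alpha$-H\"older, and then to invoke interior parabolic $L^p$ and Schauder estimates. First, since $\{V_t\}_{t\in[-R^2,0]}$ is a Brakke flow with forcing term $u$ and $V_t=\abs{\graph f(\cdot,t)}$, all hypotheses of Theorem~\ref{thm:main} hold on the open parabolic cylinder $Q_R(T)^\circ$: conditions \ref{itm:main_conti_f}--\ref{itm:main_conti_nabla_f} follow from $f\in C^{1,\alpha}(\overline{Q}_R(T);T^\perp)$, conditions \ref{itm:main_delta_V}--\ref{itm:main_mean_curv} are part of Definitions~\ref{def:velocity_Brakke} and~\ref{def:Brakke_with_forcing_term}, and for condition~\ref{itm:main_L2_velocity} one notes that $v\coloneqq h(V_t,\cdot)+u^\perp$ lies in $L^2_\loc(\mu;\R^n)$---here $u\in C^{0,\alpha}$ is bounded, $\mu$ is finite since $\wt{V_t}(C_R(T))=\haus^k(\graph f(\cdot,t))$ is bounded uniformly in $t$, and $h(V_t,\cdot)\in L^2_\loc(\mu)$ by assumption---and is a generalized normal velocity in the sense of Definition~\ref{def:velocity_Brakke}, so~\eqref{eq:main_L2_velocity} holds by Remark~\ref{rem:L2-flow}. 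Hence Theorem~\ref{thm:main} gives $\partial_t f,\nabla^2 f\in L^2_\loc(Q_R(T)^\circ)$ and the identity~\eqref{eq:main_t_derivative}. Substituting $v=h+u^\perp$ into~\eqref{eq:main_t_derivative} and using the classical formula---valid $\wt{V_t}$-a.e. once $\nabla^2 f\in L^2_\loc$---that expresses the generalized mean curvature of $\graph f(\cdot,t)$ through $\nabla^2 f$ and $\nabla f$, a routine graphical computation turns~\eqref{eq:main_t_derivative} into
\begin{equation*}
    \partial_t f = g^{ij}(\nabla f)\,\partial_i\partial_j f + T^\perp u(\cdot+f,\cdot) - \nabla_{Tu(\cdot+f,\cdot)} f \qquad\text{a.e. on } Q_R(T)^\circ,
\end{equation*}
where $g_{ij}(\nabla f)\coloneqq\delta_{ij}+\partial_i f\cdot\partial_j f$, $(g^{ij}(\nabla f))$ is its inverse matrix, and $u(\cdot+f,\cdot)$ is shorthand for the map $(x,t)\mapsto u(x+f(x,t),t)$; this equation is uniformly parabolic, with ellipticity constants depending only on $n$, $k$ and $\norm{\nabla f}_{0,Q_R(T)}$.

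Next I freeze the coefficients. Set $\bar a^{ij}(x,t)\coloneqq g^{ij}(\nabla f(x,t))$ and let $\bar b(x,t)$ denote the value at $(x,t)$ of the remaining right-hand side above. Since $f\in C^{1,\alpha}(\overline{Q}_R(T);T^\perp)$, the field $(x,t)\mapsto\nabla f(x,t)$ is parabolically $\alpha$-H\"older, and since $P\mapsto g^{ij}(P)$ is smooth, $\bar a^{ij}$ is parabolically $\alpha$-H\"older with $[\bar a^{ij}]_{\alpha,Q_R(T)}\le C(n,k,\norm{\nabla f}_{0,Q_R(T)})\,[\nabla f]_{\alpha,Q_R(T)}$. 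Moreover $(x,t)\mapsto(x+f(x,t),t)$ is Lipschitz in $x$ and, $f$ being parabolically $C^{1,\alpha}$, is $(1+\alpha)/2$-H\"older in $t$; since $(1+\alpha)/2\ge\alpha/2$, the composition $u(\cdot+f,\cdot)$ is parabolically $\alpha$-H\"older on $\overline{Q}_R(T)$, and hence so is $\bar b$, with relevant norms controlled by the parabolic $C^{0,\alpha}$ norm of $u$ together with $\norm{\nabla f}_{0,Q_R(T)}$ and $R^\alpha[f]_{1+\alpha,Q_R(T)}$. Thus $f$ solves a.e.\ on $Q_R(T)^\circ$ the linear, uniformly parabolic system $\partial_t f-\bar a^{ij}\,\partial_i\partial_j f=\bar b$ with parabolically $\alpha$-H\"older coefficients and right-hand side; the system is \emph{diagonal}, the same scalar principal part acting on each component of $f$.

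Now I apply parabolic regularity theory. By the interior parabolic $L^p$ (Calder\'on--Zygmund) estimates---applied on a chain of concentric parabolic cylinders, and using that $\bar b$ is bounded because $u\in C^{0,\alpha}$---one upgrades $\partial_t f,\nabla^2 f\in L^2_\loc$ to $\partial_t f,\nabla^2 f\in L^p_\loc(Q_R(T)^\circ)$ for every $p<\infty$ (alternatively, one may quote Theorem~\ref{thm:reg_Lpq} on subcylinders), so that $f$ is a genuine strong solution. Interior parabolic Schauder estimates for linear equations with parabolically $\alpha$-H\"older coefficients, applied componentwise and combined with the parabolic rescaling that normalizes $R=1$, then give $f\in C^{2,\alpha}(\overline{Q}_{R/2}(T);T^\perp)$; a direct check of the rescaling shows that the resulting estimate is exactly~\eqref{eq:Holder_est}, with $C$ depending only on $n$, $k$, $\alpha$, $\norm{\nabla f}_{0,Q_R(T)}$ and $R^\alpha[f]_{1+\alpha,Q_R(T)}$ (these control the ellipticity constants and the parabolic H\"older seminorm of $\bar a$; the $C^{0,\alpha}$ norm of $u$ enters only through the right-hand side). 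Finally, with $f\in C^{2,\alpha}$ both sides of the displayed equation are continuous, so it holds everywhere on $Q_{R/2}(T)$; equivalently, by Remark~\ref{rem:identity}, the normal velocity of $\graph f(\cdot,t)$ equals $h(V_t,\cdot)+u^\perp$ with $h(V_t,\cdot)$ the classical mean curvature, that is, $f$ is a classical solution of~\eqref{eq:intro_motion_law}.

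The substantive content---that the Brakke inequality forces the PDE together with the $L^2$ bound on $\partial_t f$ and $\nabla^2 f$---is entirely supplied by Theorem~\ref{thm:main}; what remains is the bookkeeping of the last two paragraphs. The point needing the most care is verifying that $\bar b$, which contains the composition $u(x+f(x,t),t)$, is parabolically $\alpha$-H\"older: this relies precisely on the parabolic $C^{1,\alpha}$ regularity of $f$, which makes $f$ Lipschitz in space and $(1+\alpha)/2$-H\"older---hence better than $\alpha/2$-H\"older---in time. Tracking the exact $R$-dependence of the constant in~\eqref{eq:Holder_est} through the rescaling, and invoking the interior-in-space, up-to-the-final-time versions of the parabolic $L^p$ and Schauder estimates (legitimate here since these estimates use the solution only at earlier times and a single null time slice does not affect $L^p_\loc$), are routine.
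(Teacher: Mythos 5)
Your overall route is the same as the paper's: use Theorem \ref{thm:main} to obtain $\partial_t f,\nabla^2 f\in L^2_{\mathrm{loc}}$ and the identity \eqref{eq:main_t_derivative}, rewrite it as the uniformly parabolic system $\partial_t f^a=g^{ij}\partial_{ij}f^a+U^a$ with parabolically $\alpha$-H\"older coefficients and right-hand side (your observation that $T^\perp u-\nabla_{Tu}f=T^\perp u^\perp-\nabla_{Tu^\perp}f$, so that one may use $u$ in place of $u^\perp$, is correct, as is the verification that the composition $u(x+f(x,t),t)$ is parabolically $\alpha$-H\"older thanks to $f\in C^{1,\alpha}$), and then invoke Schauder theory. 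The one step you gloss over is exactly the step the paper is careful about: the interior parabolic Schauder \emph{estimate} is an a priori estimate for solutions already known to be classical (or $C^{2,\alpha}$), so it cannot by itself upgrade a solution known only to lie in $W^{1,2}L^2\cap L^2W^{2,2}$ (or even $W^{1,p}L^p\cap L^pW^{2,p}$ for all $p$) to $C^{2,\alpha}$. The paper closes this by a cut-off and an existence-plus-uniqueness argument: $\tilde f^a=\zeta f^a$ solves an initial-boundary value problem, \cite[Theorem 5.14]{Lie96} produces a classical $C^{2,\alpha}$ solution $\bar f^a$ of that problem, and uniqueness in the energy class $W^{1,2}((-1,0);L^2)\cap L^2((-1,0);W^{2,2})$ forces $\tilde f^a=\bar f^a$; only then is the interior a priori estimate \cite[Theorem 4.9]{Lie96} applied to get \eqref{eq:Holder_est}. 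Your parenthetical ``alternatively, one may quote Theorem~\ref{thm:reg_Lpq} on subcylinders'' shows you sense the issue at the $L^p$ stage, but the same identification device (or a genuine Schauder \emph{regularity} theorem for strong solutions, explicitly cited) is needed at the $C^{2,\alpha}$ stage as well; as written, ``interior Schauder estimates \ldots then give $f\in C^{2,\alpha}$'' asserts the conclusion rather than deriving it. The rest --- the H\"older bookkeeping for $\bar a^{ij}$ and $\bar b$, the scaling to $R=1$, the dependence of the constant, and the passage from the a.e.\ identity to the classical motion law once $f\in C^{2,\alpha}$ --- matches the paper and is fine.
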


    \section{Proof of Theorem \ref{thm:main}} 
    \label{sec:Proof}

    This section is devoted to the proof of Theorem \ref{thm:main}.
    We first prove the square integrability of the Hessian of $f$.
    
    \begin{lemma}
        \label{lem:Hess_L2}
        Under the assumptions of Theorem \ref{thm:main}, $\nabla^2 f \in L^2_\loc(\Omega \times I;\Hom(T \otimes T;T^\perp))$.
    \end{lemma}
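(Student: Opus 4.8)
The plan is to extract the $L^2_\loc$ bound on $\nabla^2 f$ directly from the two hypotheses \ref{itm:main_mean_curv} and \ref{itm:main_delta_V}, by writing the generalized mean curvature of the graph in terms of $f$ and its derivatives and recognizing that its principal part is a uniformly elliptic second-order operator applied to $f$. First I would recall the standard formula for the first variation of a $C^1$ graph. Fix coordinates so that $T=\R^k\times\{0\}$ and write $g_{ij}=\delta_{ij}+\partial_i f\cdot\partial_j f$ for the induced metric, with inverse $g^{ij}$ and area element $\sqrt{g}$; these are continuous functions of $(x,t)$ by assumption \ref{itm:main_conti_nabla_f}, and $\sqrt{g}\ge 1$, so $g^{ij}$ is uniformly elliptic on compact subsets. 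For a test field $\varphi\in C^1_c$, the first variation $\delta V_t(\varphi)$ of $V_t=\abs{\graph f(\cdot,t)}$ is an integral over $\Omega$ against $\sqrt g$ involving $\nabla\varphi$ and $\nabla f$ only; integrating by parts in $x$ (which is legitimate since $\varphi$ is smooth and the coefficients are merely continuous) and using hypothesis \ref{itm:main_delta_V}, which says $\delta V_t$ is represented by $h(V_t,\cdot)\in L^2$, one obtains that the distributional identity
\begin{equation*}
    \partial_i\bigl(\sqrt{g}\,g^{ij}\partial_j f^l\bigr) = -\sqrt{g}\,(h(V_t,\cdot))^l \circ (\id,f)
\end{equation*}
holds for each component $l=k+1,\dots,n$ and a.e.\ $t\in I$, in the sense of distributions on $\Omega$; here the right-hand side is in $L^2_\loc(\Omega)$ uniformly in $t$ after integrating over $\tilde I$, by \ref{itm:main_mean_curv}. (One should be slightly careful that $h$ has no tangential part, by Brakke's perpendicularity theorem, but in any case $|h|^2$ controls each component.)

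Next I would invoke the interior $L^2$-regularity theory for linear divergence-form elliptic equations with bounded, uniformly elliptic (only measurable, in fact continuous here) coefficients: from $\partial_i(a^{ij}\partial_j w)=F$ with $a^{ij}$ uniformly elliptic and bounded on $\tilde\Omega\subset\subset\Omega$ and $F\in L^2(\tilde\Omega)$, one gets $w\in W^{2,2}_\loc$ with the Caccioppoli-type estimate
\begin{equation*}
    \norm{\nabla^2 w}_{L^2(\tilde\Omega')} \le C\bigl(\norm{F}_{L^2(\tilde\Omega)} + \norm{\nabla w}_{L^2(\tilde\Omega)} + \norm{w}_{L^2(\tilde\Omega)}\bigr)
\end{equation*}
for $\tilde\Omega'\subset\subset\tilde\Omega$, with $C$ depending only on the ellipticity constants, the dimension, and the domains. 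Actually, since here the coefficients $a^{ij}=\sqrt g\,g^{ij}$ are continuous (not merely measurable), this is the classical Schauder–Calderón–Zygmund $W^{2,2}$ estimate and no delicate perturbation argument is needed; the ellipticity constants depend only on $\norm{\nabla f}_{0}$. Applying this for a.e.\ fixed $t$, with $w=f^l(\cdot,t)$, $F=-\sqrt g\,(h(V_t,\cdot))^l$, gives $\nabla^2 f(\cdot,t)\in L^2_\loc(\Omega)$ for a.e.\ $t$, and
\begin{equation*}
    \norm{\nabla^2 f(\cdot,t)}_{L^2(\tilde\Omega')}^2 \le C\Bigl(\int_{\tilde\Omega}|h(V_t,\cdot)|^2\,dx + \norm{\nabla f(\cdot,t)}_{L^2(\tilde\Omega)}^2 + \norm{f(\cdot,t)}_{L^2(\tilde\Omega)}^2\Bigr).
\end{equation*}
Finally I would integrate this inequality in $t$ over $\tilde I\subset\subset I$: the first term is finite by \eqref{eq:main_mean_curv}, and the remaining two are finite because $f$ and $\nabla f$ are continuous, hence locally bounded, on $\Omega\times I$. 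This yields $\nabla^2 f\in L^2(\tilde\Omega'\times\tilde I)$ for all such $\tilde\Omega'$, $\tilde I$, i.e.\ $\nabla^2 f\in L^2_\loc(\Omega\times I;\Hom(T\otimes T;T^\perp))$.

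The main obstacle I anticipate is purely bookkeeping rather than conceptual: one must verify carefully that assumption \ref{itm:main_delta_V} together with the explicit graph formula for $\delta V_t$ really does give the divergence-form PDE above distributionally in $x$ with the claimed $L^2$ right-hand side — in particular that the identification of $h(V_t,\cdot)$ (defined via $\wt{V_t}$ on the graph in $\R^n$) with the source term in $\Omega$ after the change of variables $x\mapsto x+f(x,t)$ is done correctly, including the Jacobian factor $\sqrt g$, and that the measurability in $t$ is good enough to integrate the pointwise-in-$t$ estimate. There is also the minor point of justifying that $\nabla^2 f$, produced fiberwise in $x$ for a.e.\ $t$, is genuinely a weak $t$-dependent derivative on the product $\Omega\times I$ (as opposed to just for a.e.\ slice); this follows from a routine Fubini/approximation argument once the uniform-in-$t$ $L^2$ bound is in hand.
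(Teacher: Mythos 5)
Your overall strategy coincides with the paper's: rewrite assumption \ref{itm:main_delta_V} via the area formula as the prescribed mean curvature system $\partial_i(\sqrt{g}\,g^{ij}\partial_j f^a)=\sqrt{g}\,h^a(V_t,\cdot+f(\cdot,t))$ for a.e.\ $t$, obtain a slice-wise $W^{2,2}$ estimate, and integrate over $\tilde I$ using \eqref{eq:main_mean_curv} and the local boundedness of $f$ and $\nabla f$. The derivation of the system, the handling of the Jacobian factor, and the final Fubini step are all fine.

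The gap is in the elliptic regularity step. The statement you invoke --- that $\partial_i(a^{ij}\partial_j w)=F$ with bounded, uniformly elliptic, merely measurable (or even merely continuous) coefficients and $F\in L^2$ implies $w\in W^{2,2}_\loc$ --- is false, and the ``classical Calder\'on--Zygmund'' theory you appeal to is the \emph{non-divergence-form} theory; for divergence form with continuous coefficients one only gets De Giorgi--Nash H\"older continuity and Meyers-type $W^{1,2+\varepsilon}$ bounds, because estimating $\nabla^2 w$ forces you to differentiate the coefficients. A one-dimensional counterexample: with $F=0$ one has $w'=c/a$, which for a continuous, nowhere differentiable $a>0$ is not even $W^{1,1}_\loc$. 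Since here $a^{ij}=\sqrt{g}\,g^{ij}$ is exactly as regular as $\nabla f$, i.e.\ only continuous, the lemma you cite does not apply. The repair is to exploit the quasilinear structure, which is what the paper does: the equation is $\partial_i(A_i^a(\nabla f))=F^a$ with $A$ a smooth function of $P=\nabla f$, so a difference quotient of the equation produces a linearized system in $\nabla D_h f$ whose coefficients $\int_0^1\partial_{P}A(\nabla f+\tau hD_h\nabla f)\,d\tau$ contain no underdifferentiated coefficient term; the Legendre--Hadamard inequality \eqref{eq:Hess_LH}, combined with freezing of coefficients (legitimate because $\nabla f$ is uniformly continuous on $\Omega''$), then yields the Caccioppoli-type estimate \eqref{eq:Hess_2nd_derivative}. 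This is precisely why the constant in the paper's estimate depends on the modulus of continuity of $\nabla f$, a dependence absent from your version. Once this step is corrected, the rest of your write-up goes through unchanged.
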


    \begin{proof}
        Without loss of generality, we may assume $T = \R^k \times \{0\}$.
        By assumption \ref{itm:main_mean_curv} of Theorem \ref{thm:main}, we have $h(V_t,\cdot) \in L^2_\loc(\wt{V_t};\R^n)$ for a.e. $t \in I$.
        For such $t \in I$, it follows from the definition of $h(V_t,\cdot) = (h^1(V_t,\cdot),\dots,h^n(V_t,\cdot))$ and the area formula that $f(\cdot,t) = (f^{k+1}(\cdot,t),\dots,f^n(\cdot,t))$ is a weak solution to the prescribed mean curvature system
        \begin{equation}
            \label{eq:Hess_mss}
            \sum_{i,j=1}^k \partial_i\Big(\sqrt{g(\nabla f(\cdot,t))}g^{ij}(\nabla f(\cdot,t))\partial_j f^a(\cdot,t)\Big) = \sqrt{g(\nabla f(\cdot,t))}h^a(V_t,\cdot + f(\cdot,t))
        \end{equation}
        for $a = k+1,\dots,n$.
        Here, $g^{ij}$ and $g$ are defined by
        \begin{equation}
            \label{eq:Hess_metric}
            (g_{ij}(P)) \coloneqq \id_{\R^k} + P^\ast P, \qquad (g^{ij}(P)) \coloneqq (g_{ij}(P))^{-1}, \qquad g(P) \coloneqq \det(g_{ij}(P))
        \end{equation}
        for $P \in \Hom(\R^k,\R^{n-k})$.
        The system \eqref{eq:Hess_mss} satisfies the Legendre--Hadamard condition; more precisely, the following inequality holds:
        \begin{equation}
            \label{eq:Hess_LH}
            \sum_{i,j,l=1}^k\sum_{a,b=1}^{n-k}\partial_{P_j^b}\Big(\sqrt{g(P)}g^{il}(P)P_l^a\Big)\xi_i\xi_j\eta^a\eta^b \geq \frac{\sqrt{g(P)}}{(1+\abs{P}^2)^2}\abs{\xi}^2\abs{\eta}^2
        \end{equation}
        for all $P = (P_i^a) \in \Hom(\R^k,\R^{n-k})$, $\xi = (\xi_1,\dots,\xi_k) \in \R^k$, and $\eta = (\eta^1,\dots,\eta^{n-k}) \in \R^{n-k}$.
        Fix $\Omega' \subset\subset \Omega'' \subset\subset \Omega$ and $I' \subset\subset I$.
        Since $\nabla f(\cdot,t)$ is bounded on $\Omega''$ by assumption \ref{itm:main_conti_nabla_f} of Theorem \ref{thm:main}, we have
        \begin{equation}
            \frac{\sqrt{g(\nabla f(\cdot,t))}}{(1 + \abs{\nabla f(\cdot,t)}^2)^2} \geq \frac{1}{(1 + \norm{\nabla f(\cdot,t)}_{0,\Omega''}^2)^{3/2}} > 0
        \end{equation}
        on $\Omega''$.
        Hence, by a difference quotient argument, we obtain
        \begin{equation}
            \label{eq:Hess_2nd_derivative}
            \norm{\nabla^2 f(\cdot,t)}_{L^2(\Omega')} \leq C(\norm{\nabla f(\cdot,t)}_{L^2(\Omega'')} + \norm{h(V_t,\cdot)}_{L^2(U'',\wt{V_t})}),
        \end{equation}
        where $U'' \coloneqq \Omega'' \times \R^{n-k}$ and $C > 0$ is a constant depending only on $n$, $k$, $\Omega'$, $\Omega''$, $\norm{\nabla f(\cdot,t)}_{0,\Omega''}$, and the modulus of continuity of $\nabla f(\cdot,t)$ on $\Omega''$.
        Integrating \eqref{eq:Hess_2nd_derivative} over $I'$ yields
        \begin{equation*}
            \norm{\nabla^2 f}_{L^2(\Omega' \times I')} \leq C'(\norm{\nabla f}_{L^2(\Omega'' \times I')} + \norm{h}_{L^2(U'' \times I',\mu)}),
        \end{equation*}
        where $C' > 0$ is a constant depending only on $n$, $k$, $\Omega'$, $\Omega''$, $I'$, $\norm{\nabla f}_{0,\Omega'' \times I'}$, and the modulus of continuity of $\nabla f$ on $\Omega'' \times I'$.
        By assumptions \ref{itm:main_conti_f} and \ref{itm:main_mean_curv} of Theorem \ref{thm:main}, the right-hand side of the inequality above is finite, which implies that $\nabla^2 f$ is locally square integrable.
    \end{proof}

    We now turn to the proof of Theorem \ref{thm:main}.

    \begin{proof}[Proof of Theorem \ref{thm:main}]
        By Lemma \ref{lem:Hess_L2} and the fact that $v \in L^2_\loc(\mu;\R^n)$, it suffices to prove \eqref{eq:main_t_derivative}.
        Assumption \ref{itm:main_L2_velocity} implies that the linear functional $L$ on $C^1_c(U \times I)$, defined by
        \begin{equation*}
            L(\phi) \coloneqq \int_I\int_U \nabla \phi \cdot v + \partial_t \phi \,d\wt{V_t}dt
        \end{equation*}
        for $\phi \in C^1_c(U \times I)$, extends to a continuous linear functional on $C^0_c(U \times I)$.
        Hence, by the Riesz representation theorem, there exists a signed Radon measure $\xi$ on $U \times I$ such that
        \begin{equation}
            \label{eq:xi}
            L(\phi) = \int_{U \times I} \phi \,d\xi
        \end{equation}
        for all $\phi \in C^1_c(U \times I)$.
        Furthermore, since $L(\phi) = 0$ for any $\phi \in C^1_c(U \times I)$ with $\spt \phi \cap \graph f = \emptyset$, it follows that
        \begin{equation}
            \label{eq:xi_supp}
            \spt \xi \subset \graph f.
        \end{equation}

        Next, we derive two identities for $\partial_t(f\sqrt{g})$ and $\partial_t \sqrt{g}$, respectively, where $\sqrt{g} \coloneqq \sqrt{g(\nabla f)}$ is as in \eqref{eq:Hess_metric}, and then combine them to obtain \eqref{eq:main_t_derivative}.
        Let $\psi \in C^1_c(\Omega \times I;T^\perp)$.
        Substituting $\phi(x,t) \coloneqq \psi(Tx,t) \cdot T^\perp x$ into \eqref{eq:xi}, we obtain
        \begin{multline*}
            \int_I\int_U (\nabla_{Tv(x,t)}\psi(Tx,t) \cdot T^\perp x + \psi(Tx,t) \cdot T^\perp v(x,t) + \partial_t \psi(Tx,t) \cdot T^\perp x \,d\wt{V_t}dt \\
            = \int_{U \times I} \psi(Tx,t) \cdot T^\perp x \,d\xi(x,t).
        \end{multline*}
        By applying the area formula and using \eqref{eq:xi_supp}, the identity above becomes
        \begin{equation}
            \label{eq:t_derivative_fg}
            -\int_I\int_\Omega \partial_t \psi \cdot f\sqrt{g} \,dxdt = \int_I\int_\Omega \nabla_{T\tilde{v}}\psi \cdot f\sqrt{g} + \psi \cdot T^\perp\tilde{v}\sqrt{g} \,dxdt - \int_{\Omega \times I} \psi \cdot f \,d\tilde{\xi},
        \end{equation}
        where $\tilde{v}(x,t) \coloneqq v(x+f(x,t),t)$, and $\tilde{\xi} \coloneqq (T \times \id_I)_\sharp \xi$ denotes the pushforward Radon measure on $\Omega \times I$.
        Similarly, for any $\tilde{\psi} \in C^1_c(\Omega \times I)$, substituting $\phi(x,t) \coloneqq \tilde{\psi}(Tx,t)$ into \eqref{eq:xi} yields
        \begin{equation}
            \label{eq:t_derivative_g}
            -\int_I\int_\Omega \partial_t \tilde{\psi} \sqrt{g} \,dxdt = \int_I\int_\Omega \nabla\tilde{\psi} \cdot T\tilde{v}\sqrt{g} \,dxdt - \int_{\Omega \times I} \tilde{\psi} \,d\tilde{\xi}.
        \end{equation}
        We define $L_1(\psi)$ and $L_2(\psi)$ by the right-hand sides of \eqref{eq:t_derivative_fg} and \eqref{eq:t_derivative_g}, respectively.
        Note that $L_1$ and $L_2$ uniquely extend to continuous linear functionals on $L^2(I;W^{1,2}(\Omega;T^\perp)) \cap C^0_c(\Omega \times I;T^\perp)$ and $L^2(I;W^{1,2}(\Omega)) \cap C^0_c(\Omega \times I)$, respectively.

        If we assume that $f$ and $v$ are smooth in space-time, then the desired identity \eqref{eq:main_t_derivative} follows from \eqref{eq:t_derivative_fg}, \eqref{eq:t_derivative_g}, and the quotient rule for derivatives.
        To justify this argument, we employ a mollifier.
        Let $\rho$ be a smooth, radially symmetric function on $T \times \R$ satisfying
        \begin{equation*}
            \spt \rho \subset (B_1 \cap T) \times (-1,1), \qquad \int_{T \times \R} \rho(x,t) \,dxdt = 1.
        \end{equation*}
        For each $\varepsilon > 0$, define $\rho_\varepsilon(x,t) \coloneqq \varepsilon^{-k-1}\rho(x/\varepsilon,t/\varepsilon)$ so that $\int_{T \times \R} \rho_\varepsilon \,dxdt = 1$.
        Hereafter, we denote by a subscript $\varepsilon$ the convolution of a function with $\rho_\varepsilon$.
        Fix $\tilde{\Omega} \subset\subset \Omega$ and $\tilde{I} \subset\subset I$, and let $\psi \in C^1_c(\tilde{\Omega} \times \tilde{I};T^\perp)$.
        Since $f$ and $g$ are continuous on $\Omega \times I$ by assumptions \ref{itm:main_conti_f} and \ref{itm:main_conti_nabla_f}, and since $g \geq 1$, we have
        \begin{equation}
            \label{eq:unif_conv}
            (f\sqrt{g})_\varepsilon \longrightarrow f\sqrt{g}, \quad \frac{1}{(\sqrt{g})_\varepsilon} \longrightarrow \frac{1}{\sqrt{g}} \qquad \text{uniformly on compact subsets of } \Omega \times I.
        \end{equation}
        Hence, the distributional time derivative $\partial_t f$ can be expressed as
        \begin{equation}
            \label{eq:limit_1}
            \begin{split}
                &-\int_I\int_\Omega \partial_t \psi \cdot f \,dxdt = -\lim_{\varepsilon \to 0} \int_{\tilde{I}}\int_{\tilde{\Omega}} \partial_t \psi \cdot \frac{(f\sqrt{g})_\varepsilon}{(\sqrt{g})_\varepsilon} \,dxdt \\
                &= -\lim_{\varepsilon \to 0} \int_{\tilde{I}}\int_{\tilde{\Omega}} \partial_t\biggl(\frac{\psi}{(\sqrt{g})_\varepsilon}\biggr) \cdot (f\sqrt{g})_\varepsilon + \psi \cdot (f\sqrt{g})_\varepsilon \frac{\partial_t(\sqrt{g})_\varepsilon}{((\sqrt{g})_\varepsilon)^2} \,dxdt \\
                &= -\lim_{\varepsilon \to 0} \int_{\tilde{I}}\int_{\tilde{\Omega}} \partial_t\biggl(\frac{\psi}{(\sqrt{g})_\varepsilon}\biggr) \cdot (f\sqrt{g})_\varepsilon - \partial_t\biggl(\frac{\psi \cdot (f\sqrt{g})_\varepsilon}{((\sqrt{g})_\varepsilon)^2}\biggr) (\sqrt{g})_\varepsilon \,dxdt \\
                &= \lim_{\varepsilon \to 0} \biggl[L_1\biggl(\biggl(\frac{\psi}{(\sqrt{g})_\varepsilon}\biggr)_\varepsilon\biggr) - L_2\biggl(\biggl(\frac{\psi \cdot (f\sqrt{g})_\varepsilon}{((\sqrt{g})_\varepsilon)^2}\biggr)_\varepsilon\biggr)\biggr].
            \end{split}
        \end{equation}
        From assumptions \ref{itm:main_conti_f}, \ref{itm:main_conti_nabla_f}, and Lemma \ref{lem:Hess_L2}, it follows that $\nabla \sqrt{g} \in L^2_\loc(\Omega \times I;T)$.
        Combined with \eqref{eq:unif_conv}, this gives that
        \begin{alignat}{2}
            \label{eq:L2_conv_fg}
            \nabla\biggl(\frac{\psi}{(\sqrt{g})_\varepsilon}\biggr)_\varepsilon &\longrightarrow \nabla\biggl(\frac{\psi}{\sqrt{g}}\biggr) \qquad & &\text{in } L^2(\Omega \times I;\Hom(T;T^\perp)), \\
            \label{eq:L2_conv_g}
            \nabla\biggl(\frac{\psi \cdot (f\sqrt{g})_\varepsilon}{((\sqrt{g})_\varepsilon)^2}\biggr)_\varepsilon &\longrightarrow \nabla\biggl(\frac{\psi \cdot f}{\sqrt{g}}\biggr) \qquad & &\text{in } L^2(\Omega \times I;T).
        \end{alignat}
        Therefore, we conclude from \eqref{eq:unif_conv}--\eqref{eq:L2_conv_g} that
        \begin{align*}
            -\int_I\int_\Omega \partial_t\psi \cdot f \,dxdt &= L_1\biggl(\frac{\psi}{\sqrt{g}}\biggr) - L_2\biggl(\frac{\psi \cdot f}{\sqrt{g}}\biggr) \\
            &= \int_I\int_\Omega \nabla_{T\tilde{v}}\biggl(\frac{\psi}{\sqrt{g}}\biggr) \cdot f\sqrt{g} + \frac{\psi}{\sqrt{g}} \cdot T^\perp\tilde{v}\sqrt{g} \,dxdt - \int_{\Omega \times I} \frac{\psi}{\sqrt{g}} \cdot f \,d\tilde{\xi} \\
            &\quad -\int_I\int_\Omega \nabla\biggl(\frac{\psi \cdot f}{\sqrt{g}}\biggr) \cdot T\tilde{v}\sqrt{g} \,dxdt + \int_{\Omega \times I} \frac{\psi \cdot f}{\sqrt{g}} \,d\tilde{\xi} \\
            &= \int_I\int_\Omega \psi \cdot (T^\perp\tilde{v} - \nabla_{T\tilde{v}}f) \,dxdt,
        \end{align*}
        which implies \eqref{eq:main_t_derivative}.
    \end{proof}
    
    \section{Regularity for Brakke flows with forcing term}
    \label{sec:Brakke_with_forcing}

    In this section, we prove Theorems \ref{thm:reg_Lpq} and \ref{thm:reg_Holder}.
    Hereafter, we adopt Einstein's summation convention, namely, the summation symbol is omitted for repeated indices.
    We begin by deriving the parabolic PDE satisfied by graphical Brakke flows with forcing term.
    Suppose that $\{V_t\}_{t \in [-1,0]}$, $\{u(\cdot,t)\}_{t \in [-1,0]}$, and $f \in C^0([-1,0];C^1(\overline{B}^k_1;\R^{n-k}))$ satisfy assumptions \ref{itm:Lpq_integrability}--\ref{itm:graph} of Theorem \ref{thm:reg_Lpq} with $R = 1$, $T = \R^k \times \{0\}$, and $p = q = 2$.
    From Theorem \ref{thm:main} and Remark \ref{rem:L2-flow}, it follows that
        \begin{equation*}
            \partial_t f \in L^2_\loc(Q^k_1;\R^{n-k}), \quad \nabla^2 f \in L^2_\loc(Q^k_1;\R^{k^2(n-k)}),
        \end{equation*}
        and
        \begin{multline}
            \label{eq:MCF_eq}
            \partial_t f^a(x,t) = h^a(V_t,x+f(x,t)) + (u^\perp)^a(x+f(x,t),t) \\
            - \bigl(h^j(V_t,x+f(x,t))+(u^\perp)^j(x+f(x,t),t)\bigr)\partial_jf^a(x,t)
        \end{multline}
        for a.e. $(x,t) \in Q^k_1$ and for $a = k+1, \dots, n$, where superscripts indicate the corresponding components of vectors in $\R^n$.
        Furthermore, by Remark \ref{rem:identity}, \eqref{eq:MCF_eq} implies the motion law \eqref{eq:intro_motion_law}.
        Define
        \begin{equation*}
            (g^{ij}(x,t)) \coloneqq (\delta_{ij} + \partial_i f(x,t) \cdot \partial_j f(x,t))^{-1}, \qquad g(x,t) \coloneqq \det(\delta_{ij} + \partial_i f(x,t) \cdot \partial_j f(x,t)),
        \end{equation*}
        and
        \begin{equation*}
            U^a(x,t) \coloneqq (u^\perp)^a(x+f(x,t),t) - (u^\perp)^j(x+f(x,t),t)\partial_jf^a(x,t)
        \end{equation*}
        for $a = k+1,\dots,n$.
        Applying the area formula to the definition of $h(V_t,\cdot)$, we obtain
        \begin{alignat*}{2}
            h^j(V_t,x + f(x,t)) &= \frac{1}{\sqrt{g(x,t)}}\partial_i\Big(\sqrt{g(x,t)}g^{ij}(x,t)\Big) \qquad & &\text{for } j = 1,\dots,k, \\
            h^a(V_t,x+f(x,t)) &= \frac{1}{\sqrt{g(x,t)}}\partial_i\Big(\sqrt{g(x,t)}g^{ij}(x,t)\partial_jf^a(x,t)\Big) \qquad & &\text{for } a = k+1,\dots,n.
        \end{alignat*}
        Substituting these identities into \eqref{eq:MCF_eq} yields the PDE
        \begin{equation}
            \label{eq:PDE}
            \begin{split}
                \partial_t f^a &= \frac{1}{\sqrt{g}}\partial_i(\sqrt{g}g^{ij}\partial_j f^a) - \frac{1}{\sqrt{g}}\partial_i(\sqrt{g}g^{ij})\partial_j f^a + U^a \\
                &= g^{ij}\partial_{ij}f^a + U^a.
            \end{split}
        \end{equation}
        Since $\nabla f$ is uniformly continuous on $Q^k_1$, the coefficients $g^{ij}$ are bounded, uniformly continuous, and uniformly elliptic.
        In fact, the following estimates hold:
        \begin{gather}
            \label{eq:g_bound}
            \norm{g^{ij}}_{0,Q^k_1} \leq 1, \\
            \label{eq:g_ellipticity}
            g^{ij}\xi_i\xi_j \geq \frac{1}{1 + \norm{\nabla f}_{0,Q^k_1}}\abs{\xi}^2 \quad \text{for all } \xi = (\xi_1,\dots,\xi_k) \in \R^k.
        \end{gather}

        Once the parabolic PDE has been established, the regularity of $f$ follows from standard arguments.
        Nevertheless, we provide a proof for completeness.
        Let $\zeta \in C^3_c(Q^k_1)$ be a cut-off function such that
        \begin{equation}
            \label{eq:cutoff}
            \zeta \equiv 1 \text{ on } Q^k_{1/2}, \qquad 0 \leq \zeta \leq 1, \qquad \norm{\zeta}_{C^3(Q^k_1)} \leq C(k).
        \end{equation}
        Then, for each $a = k+1,\dots,n$, $\tilde{f}^a \coloneqq \zeta f^a$ solves the initial-boundary value problem
        \begin{equation}
            \label{eq:IBP}
            \left\{
                \begin{aligned}
                    \partial_t\tilde{f}^a - g^{ij}\partial_{ij}\tilde{f}^a &= \tilde{U}^a &&\text{in } Q^k_1, \\
                    \tilde{f}^a &= 0 && \text{on } \partial B^k_1 \times (-1,0), \\
                    \tilde{f}^a(\cdot,-1) &= 0 && \text{in } B^k_1,
                \end{aligned}
            \right.
        \end{equation}
        where $\tilde{U}^a \coloneqq \zeta U^a + \partial_t\zeta f^a - 2g^{ij}\partial_i\zeta\partial_j f^a - g^{ij}\partial_{ij}\zeta f^a$.
        Moreover, by \cite[Theorem 7.17]{Lie96}, $\tilde{f}^a$ is the unique solution to \eqref{eq:IBP} in $W^{1,2}((-1,0);L^2(B^k_1)) \cap L^2((-1,0);W^{2,2}(B^k_1))$.

        We first prove Theorem \ref{thm:reg_Lpq}.

    \begin{proof}[Proof of Theorem \ref{thm:reg_Lpq}]
        Without loss of generality, we may assume $R = 1$ and $T = \R^k \times \{0\}$.
        Fix $a = k+1,\dots,n$.
        By assumption \ref{itm:Lpq_integrability}, we have
        \begin{equation*}
            \norm{U}_{L^{p,q}(Q^k_1)} \leq (1 + \norm{\nabla f}_{0,Q^k_1})\norm{u}_{L^{p,q}}.
        \end{equation*}
        Hence, by \cite[Theorem 2.3]{DHP07}, there exists a unique solution
        \begin{equation*}
            \bar{f}^a \in W^{1,q}((-1,0);L^p(B^k_1)) \cap L^q((-1,0);W^{2,p}(B^k_1))
        \end{equation*}
        to \eqref{eq:IBP}.
        The uniqueness of solutions to \eqref{eq:IBP} in $W^{1,2}((-1,0);L^2(B^k_1)) \cap L^2((-1,0);W^{2,2}(B^k_1))$ implies that
        \begin{equation*}
            \tilde{f}^a = \bar{f}^a \in W^{1,q}((-1,0);L^p(B^k_1)) \cap L^q((-1,0);W^{2,p}(B^k_1)).
        \end{equation*}
        The estimate \eqref{eq:Lpq_est} follows, for example, from \cite[Theorem 5.2]{DK18}.
        Since \cite[Theorem 5.2]{DK18} is stated for PDEs on the whole space-time, it is necessary to extend $\tilde{f}^a$, $g^{ij}$, and $\tilde{U}^a$ to $\R^k \times \R$.
        Specifically, we extend $\tilde{f}^a$ and $\tilde{U}^a$ by zero to $\R^k \times (-\infty,0]$, and extend $g^{ij}$ suitably to $\R^k \times (-\infty,0]$ so that it remains uniformly continuous and satisfies \eqref{eq:g_bound} and \eqref{eq:g_ellipticity} there.
        We then take their even extensions with respect to $t = 0$.
        The resulting functions satisfy $\partial_t\tilde{f}^a - g^{ij}\partial_{ij}\tilde{f} = \tilde{U}^a$ in $\R^k \times \R$, and thus \cite[Theorem 5.2]{DK18} can be applied.
    \end{proof}

    Finally, we turn to the proof of Theorem \ref{thm:reg_Holder}.

    \begin{proof}[Proof of Theorem \ref{thm:reg_Holder}]
        Without loss of generality, we may assume $R = 1$ and $T = \R^k \times \{0\}$.
        Fix $a = k+1,\dots,n$.
        Since $u$ and $\nabla f$ are $\alpha$-H\"{o}lder continuous, we have
        \begin{gather*}
            \norm{U}_{\alpha,Q^k_1} \leq C\norm{u}_{\alpha,Q^k_1}, \qquad
            [g^{ij}]_{\alpha,Q^k_1} \leq C[\nabla f]_{\alpha,Q^k_1},
        \end{gather*}
        where $C > 0$ is a constant depending only on $n$, $k$, $\alpha$, $\norm{\nabla f}_{0,Q^k_1}$, and $[f]_{1+\alpha,Q^k_1}$.
        Hence, by \cite[Theorem 5.14]{Lie96}, there exists a unique classical solution $\bar{f}^a \in C^{2,\alpha}(\overline{Q}^k_1)$ to \eqref{eq:IBP}.
        Moreover, the uniqueness of solutions to \eqref{eq:IBP} in $W^{1,2}((-1,0);L^2(B^k_1)) \cap L^2((-1,0);W^{2,2}(B^k_1))$ implies that $\tilde{f}^a = \bar{f}^a \in C^{2,\alpha}(\overline{Q}^k_1)$.
        Applying \cite[Theorem 4.9]{Lie96} to \eqref{eq:PDE}, we obtain \eqref{eq:Holder_est}.
    \end{proof}

    \bibliography{ref}
    \bibliographystyle{alpha}
\end{document}